\newtheorem{theorem}{Theorem}[section]
\newtheorem{proposition}[theorem]{Proposition}
\newtheorem{lemma}[theorem]{Lemma}
\newtheorem{corollary}[theorem]{Corollary}
\theoremstyle{definition}
\newtheorem{definition}[theorem]{Definition}
\newtheorem{example}[theorem]{Example}
\theoremstyle{remark}
\newtheorem{remark}[theorem]{Remark}
\newcommand{\newword}[1]{\textbf{#1}}
\newcommand{\C}{\mathbb{C}}
\newcommand{\R}{\mathbb{R}}
\newcommand{\Z}{\mathbb{Z}}
\newcommand{\N}{\mathbb{N}}
\newcommand{\stab}{\mathrm{stab}}
\newcommand{\rist}{\mathrm{rist}}
\newcommand{\includeapd}[1]{\includegraphics[scale=0.833333]{#1}}
\begin{document}

\title{A Thompson group for the basilica}

\author{James Belk}
\address{School of Mathematics and Statistics \\ University of Glasgow \\ Scotland}
\email{jim.belk@glasgow.ac.uk}

\author{Bradley Forrest}
\address{The Richard Stockton College of New Jersey \\ Mathematics Program \\ 101 Vera King Farris Drive \\ Galloway, NJ 08205}
\email{bradley.forrest@stockton.edu}

\subjclass[2010]{Primary 20F65; Secondary 20F38, 37F10}

\keywords{Thompson's groups, piecewise-linear homeomorphism, Julia set, invariant lamination}

\begin{abstract}
We describe a Thompson-like group of homeomorphisms of the basilica Julia set.  Each element of this group acts as a piecewise-linear homeomorphism of the unit circle that preserves the invariant lamination for the basilica.  We develop an analogue of tree pair diagrams for this group which we call arc pair diagrams, and we use these diagrams to prove that the group is finitely generated.  We also prove that the group is virtually simple.
\end{abstract}

\maketitle

\section{Introduction}

In the 1960's, Richard J.~Thompson introduced three infinite groups $F$, $T$, and $V$.  Each of these groups consists of piecewise-linear homeomorphisms, with $F$ acting on the unit interval, $T$ acting on the unit circle, and $V$ acting on the Cantor set.  Because of their simple definitions and unique array of properties, these groups have attracted significant attention from geometric group theorists.

One of the puzzling aspects of Thompson's groups is the fact that there are precisely three of them.  Most groups of interest in geometric group theory arise in large infinite families, so it is natural to ask whether the Thompson groups fit into any larger framework.  For this reason, many generalizations of the Thompson groups have been proposed, including Higman's groups~$G_{n,r}$~\cite{higman}, Brown's groups $F_{n,r}$ and~$T_{n,r}$~\cite{brown}, the piecewise-linear groups of Bieri and Strebel~\cite{bieri-strebel} and Stein~\cite{stein}, the braided version of~$V$ studied independently by Brin~\cite{brin} and Dehornoy~\cite{dehornoy}, the two-dimensional version of $V$ described by Brin~\cite{brin2}, and the diagram groups of Guba and Sapir~\cite{guba-sapir}.

The definitions of the three Thompson groups depend heavily on the self-similar structure of the spaces on which these groups act.  For example, each half of the unit interval is similar to the whole interval, as is each quarter or eighth.  For this reason, it is natural to ask whether there are Thompson-like groups associated with other self-similar structures, such as fractals.

In this paper we define a Thompson-like group $T_B$ that acts by homeomorphisms on the basilica Julia set (the Julia set for the quadratic polynomial $z^2-1$).  Each element of this group can be described as a piecewise-linear homeomorphism of the unit circle that preserves the invariant lamination for the basilica, as defined in~\cite{thurston}.  This group is finitely generated, and it possesses an analogue of tree-pair diagrams which we refer to as ``arc pair diagrams''.

For simplicity, we have restricted our investigation to the basilica. We expect that most of our results generalize easily to other Julia sets with similar structure to the basilica, such as the Douady rabbit.  Unfortunately, it is not clear how to best generalize the definition of $T_B$ to quadratic Julia sets whose laminations have a more complicated structure.  We would ultimately like to associate a Thompson group to every quadratic Julia set, in such a way that $T$~is associated with the circle (the Julia set for~$z^2$), $F$~is associated with the line segment (the Julia set for $z^2-2$), and $V$ is associated with disconnected quadratic Julia sets (which are always homeomorphic to the Cantor set).

Finally, we should point out that our group $T_B$ is different from the ``basilica group'' introduced by R.~Grigorchuk and A.~\.{Z}uk \cite{grigorchuk-zuk}.  The latter is the same as the iterated monodromy group \cite{nekrashevych} of the basilica polynomial, and does not act on the Julia set itself.  It is not clear what, if any, relationship exists between the basilica group and our group $T_B$.

This paper is organized as follows.  In Section~2 we briefly review the definition and properties of Thompson's group~$T$.  In Section~3 we give the necessary background on the basilica Julia set and its corresponding invariant lamination.  In Section~4 we define the elements of the group $T_B$ using arc pair diagrams. In Section~5 we prove that $T_B$ forms a group, and we characterize the elements of $T_B$ among all piecewise-linear homeomorphisms.  Section~6 describes an isomorphism between a subgroup of $T_B$ and Thompson's group~$T$.  In Section~7 we show that $T_B$ is finitely generated, and in Section~8 we prove that $T_B$ is virtually simple.

\bigskip
\section{Thompson's Group $T$}
In this section we briefly review the necessary background on Thompson's group~$T$.  See \cite{cannon-floyd-parry} for a more thorough introduction.

Let $S^1$ denote the unit circle, which we identify with $\R/\Z$.  Suppose we cut this circle in half along the points $0$ and $1/2$, as shown in the following figure:
\begin{center}
\includegraphics{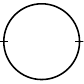}
\end{center}
We then cut each of the resulting intervals in half:
\begin{center}
\includegraphics{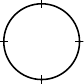}
\end{center}
and then cut some of the new intervals in half:
\begin{center}
\includegraphics{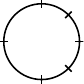}
\end{center}
Continuing in this way, we obtain a subdivision of the circle into finitely many intervals.  Any subdivision obtained in this fashion (by repeatedly cutting in half) is called a \newword{dyadic subdivision}.

The intervals of a dyadic subdivision are all of the form
\[
\left[\frac{k}{2^m},\frac{k+1}{2^m}\right]
\]
for some $k\in\Z$ and $m\in\N$.  Intervals of this form are called \newword{standard dyadic intervals}, and any endpoint of a standard dyadic interval is called a \newword{dyadic point} on the circle.  It is easy to see that any subdivision of the circle into standard dyadic intervals is in fact a dyadic subdivision.

A \newword{dyadic rearrangement} of the circle is any orientation-preserving piecewise-linear homeomorphism $f\colon S^1\to S^1$ that maps linearly between the intervals of two dyadic subdivisions.  Such a homeomorphism can be specified by a pair of dyadic subdivisions, together with a bijection between the intervals that preserves the counterclockwise order.  Three such rearrangements are shown in Figure~\ref{fig:TGenerators}.

The following proposition characterizes dyadic rearrangements.

\begin{proposition}Let $f\colon S^1\to S^1$ be a piecewise-linear homeomorphism.  Then $f$ is a dyadic rearrangement if and only if it satisfies the following conditions:
\begin{enumerate}
\item The derivative of each linear segment of $f$ has the form $2^m$ for some $m\in\Z$.\smallskip
\item Each breakpoint of $f$ maps a dyadic point in the domain to a dyadic point in the range.\qed
\end{enumerate}
\end{proposition}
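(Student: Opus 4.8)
The plan is to treat the two implications separately, with nearly all the work in the converse. For the forward direction, suppose $f$ is a dyadic rearrangement, mapping linearly between the intervals of dyadic subdivisions $\mathcal{C}$ of the domain and $\mathcal{D}$ of the range. Any interval $[k/2^m,(k+1)/2^m]$ of $\mathcal{C}$ is carried onto some interval $[\ell/2^n,(\ell+1)/2^n]$ of $\mathcal{D}$, and since a dyadic rearrangement is orientation-preserving, the linear map between these intervals has slope $2^{m-n}$; this is condition~(1). Since $f$ is affine on the interior of each interval of $\mathcal{C}$, every breakpoint of $f$ lies at an endpoint of such an interval, hence at a dyadic point, and its image is an endpoint of an interval of $\mathcal{D}$, which is again dyadic; this is condition~(2).

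For the converse, suppose $f$ is a piecewise-linear homeomorphism satisfying (1) and (2). Note first that (1) forces every slope to be positive, so $f$ is orientation-preserving. List the breakpoints of $f$ in counterclockwise order as $x_0,\dots,x_{n-1}$; by (2) each $x_i$ is dyadic, and on the arc from $x_{i-1}$ to $x_i$ the map $f$ is affine with some slope $2^{m_i}$, $m_i\in\Z$, say $f(t)=2^{m_i}t+c_i$ after lifting to $\R$. Since $x_{i-1}$ and $f(x_{i-1})$ are both dyadic, the constant $c_i=f(x_{i-1})-2^{m_i}x_{i-1}$ is dyadic as well. The heart of the argument is the following refinement claim: if the integer $p$ is chosen large enough---larger than $m_i$ plus the exponent of the denominator of $c_i$, and large enough that every $x_i$ is an integer multiple of $2^{-p}$---then the arc $[x_{i-1},x_i]$ is a union of standard dyadic intervals of length $2^{-p}$, and $f$ carries each such interval $[k/2^p,(k+1)/2^p]$ onto the interval with endpoints $c_i+k/2^{\,p-m_i}$ and $c_i+(k+1)/2^{\,p-m_i}$, which is precisely the standard dyadic interval $[\ell/2^{\,p-m_i},(\ell+1)/2^{\,p-m_i}]$ with $\ell=2^{\,p-m_i}c_i+k\in\Z$.

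Granting the claim, I would fix one exponent $p$ that works simultaneously for all $n$ arcs and let $\mathcal{A}$ be the resulting subdivision of $S^1$ into standard dyadic intervals of length $2^{-p}$; by the remark recorded above, $\mathcal{A}$ is a dyadic subdivision. Its image $\mathcal{B}=f(\mathcal{A})$ consists of standard dyadic intervals which, because $f$ is a homeomorphism, tile $S^1$, so $\mathcal{B}$ is also a dyadic subdivision; and $f$ maps linearly between the intervals of $\mathcal{A}$ and those of $\mathcal{B}$, preserving counterclockwise order since $f$ is orientation-preserving. Hence $f$ is a dyadic rearrangement. I do not anticipate a conceptual obstacle here; the one point requiring care is the bookkeeping in the refinement step, where $p$ must be large enough at once to tile each arc by standard dyadic intervals, to clear the denominator of $c_i$, and (when $m_i<0$) to absorb the factor $2^{-m_i}$.
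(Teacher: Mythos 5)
The paper offers no proof of this proposition at all --- the statement is simply terminated with a \qed{} as a routine verification --- so there is nothing to compare against line by line. Your argument is the standard one that the authors evidently have in mind, and the substantive part is correct: the forward direction is immediate from the definition, and in the converse the key computation is exactly the one you isolate, namely that on each affine piece $f(t)=2^{m_i}t+c_i$ the constant $c_i=f(x_{i-1})-2^{m_i}x_{i-1}$ is dyadic because $x_{i-1}$ and $f(x_{i-1})$ are, after which a sufficiently fine common refinement into standard dyadic intervals of length $2^{-p}$ is carried to standard dyadic intervals. Your bookkeeping on $p$ (clearing the denominators of the $x_i$ and of $c_i$, and keeping $p-m_i\geq 0$) is right, and the appeal to the paper's remark that any tiling of $S^1$ by standard dyadic intervals is a dyadic subdivision closes the argument.

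One genuine, if degenerate, gap: your converse silently assumes that $f$ has at least one breakpoint, since the whole construction is anchored on the list $x_0,\dots,x_{n-1}$ and on the dyadicity of $c_i$ derived from $x_{i-1}$. If $f$ has no breakpoints, condition (1) forces $f$ to be a rotation $t\mapsto t+c$ (constant slope $2^m$ together with $\tilde f(t+1)=\tilde f(t)+1$ gives $m=0$), condition (2) is vacuous, and the statement as literally written fails when $c$ is not dyadic: such a rotation satisfies (1) and (2) but is not a dyadic rearrangement. So you should either treat the breakpoint-free case separately (and observe that the proposition is implicitly being read with the convention that $f$ carries dyadic points to dyadic points, as in the Cannon--Floyd--Parry characterization of $T$), or at minimum record that your argument covers all $f$ with at least one breakpoint, which is where the proposition is actually used.
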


\begin{figure}[b]
\centering
\fbox{\includegraphics{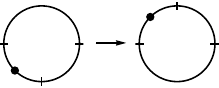}}
\hfill
\fbox{\includegraphics{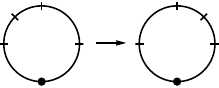}}
\hfill
\fbox{\includegraphics{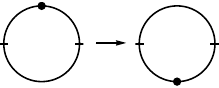}}
\caption{Three dyadic rearrangements of the circle.  We have used a dot in each case to indicate a corresponding pair of intervals in the domain and range.}
\label{fig:TGenerators}
\end{figure}
It is easy to see from this proposition that the dyadic rearrangements of the circle form a group under composition.  This group is known as \newword{Thompson's Group~$\boldsymbol T$}.  The following theorem summarizes some of the basic properties of this group, most of which are proven in \cite{cannon-floyd-parry}.

\bigbreak
\begin{theorem}\quad\label{thm:TFacts}
\begin{enumerate}
\item $T$ is finitely generated.  In particular, $T$ is generated by the three dyadic rearrangements shown in Figure~\ref{fig:TGenerators}.\smallskip
\item $T$ is finitely presented, and indeed has type $F_\infty$ \cite{brown}.\smallskip
\item $T$ is simple.\smallskip
\item $T$ acts transitively on the dyadic points of the circle.\smallskip
\item The stabilizer under $T$ of any dyadic point is isomorphic to Thompson's group~$F$.
\end{enumerate}
\end{theorem}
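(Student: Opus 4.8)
The five assertions are of very different depths, so the plan is to prove them in the order (4), (5), (1), (3), (2), letting the cheap observations feed the expensive ones.

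\textbf{Parts (4) and (5).} For (4), the key remark is that rigid rotation of $\R/\Z$ by a dyadic number $p=k/2^m$ is a piecewise-linear homeomorphism all of whose slopes equal $1=2^0$ and which carries dyadic points to dyadic points; by the Proposition it therefore lies in $T$, and it sends $0$ to $p$, so $T$ is transitive on dyadic points. For (5), if $f\in T$ fixes $0\in\R/\Z$, then cutting the circle open at $0$ turns $f$ into an orientation-preserving piecewise-linear homeomorphism of $[0,1]$ fixing both endpoints, whose slopes are integer powers of $2$ and whose breakpoints lie at dyadic points and are sent to dyadic points -- exactly the defining data of an element of $F$. This identification is a bijective group homomorphism $\stab_T(0)\to F$, which proves (5).

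\textbf{Part (1).} Write $A,B$ for two of the rearrangements in Figure~\ref{fig:TGenerators}, chosen so that $\langle A,B\rangle=F$, and $C$ for the third, and set $G=\langle A,B,C\rangle$; I want $G=T$. Since $G\supseteq F$, part~(5) reduces this to showing that $G$ is transitive on dyadic points: given $g\in T$ the point $g(0)$ is dyadic, so some $r\in G$ has $r(0)=g(0)$, whence $r^{-1}g$ fixes $0$ and lies in $\stab_T(0)=F\subseteq G$, giving $g\in G$. To get transitivity I would produce, for each $n$, the partial rotation $C_n\in G$ that cyclically advances the $2^n$ equal standard dyadic intervals by one step, by writing $C_n$ as an explicit word in $A,B,C$ via a recursion; since $C_n$ acts on $\{k/2^n\}$ as the cyclic shift, powers of the $C_n$ carry $0$ to every dyadic point. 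Matching $A,B,C$ with the three pictures is then a direct check. (Alternatively one can induct on the number of leaves of a reduced tree pair diagram for $g$, repeatedly applying a generator to cancel a shared caret; there the only delicate point is bookkeeping the cyclic offset $\sigma$.)

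\textbf{Part (3).} This is the deepest of the five. The plan is: (i) show $T$ is perfect; (ii) show that for a proper open subarc $J\subseteq S^1$ with dyadic endpoints the subgroup $T_J$ of elements supported in $J$ is isomorphic to $[F,F]$, which is simple, and that these $T_J$ generate $T$ (split any $g$ along a dyadic subdivision fine enough that no interval is moved across the cut); (iii) given a nontrivial $N\trianglelefteq T$ and $1\ne g\in N$, choose a small dyadic arc $J$ with $g(J)\cap J=\emptyset$ and a nontrivial $h\in T_J$, so that the commutator $[g,h]=(ghg^{-1})h^{-1}\in N$ has support confined to $J\cup g(J)$; iterating this (an Epstein/Higman-type double-commutator argument) yields a nontrivial element of $N\cap T_{J'}$ for some arc $J'$, and simplicity of $T_{J'}$ forces $T_{J'}\le N$; (iv) conclude by (i), (ii), and the fact that $T$ acts transitively enough on arcs to conjugate $T_{J'}$ onto every $T_J$. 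The hard part will be step (iii): extracting an \emph{entire} local subgroup from a single nontrivial element while controlling supports -- one genuinely needs the circle action, since $F$ itself is far from simple.

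\textbf{Part (2).} For finite presentability and type $F_\infty$ I would invoke Brown~\cite{brown}: $T$ acts on the geometric realization of the poset of dyadic subdivisions of the circle (a Stein-type complex), which is contractible; the action is cocompact on each skeleton, the cell stabilizers are of type $F_\infty$ (finite extensions of products of copies of $F$), and a discrete Morse function shows the descending links become highly connected, so Brown's finiteness criterion applies and in particular $T$ is finitely presented. Full details for all five parts may be found in~\cite{cannon-floyd-parry}.
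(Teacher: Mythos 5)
The paper offers no proof of this theorem: it is stated as background, with parts (1) and (3)--(5) deferred to \cite{cannon-floyd-parry} and part (2) to \cite{brown}. So there is no in-paper argument to compare against, and your outline is essentially a compressed version of the standard proofs in those references. Parts (4) and (5) are correct and complete as written (for (5), add the one-line remark that by (4) all dyadic-point stabilizers are conjugate, so treating $0$ suffices). In part (1), the reduction of generation to ``$\langle A,B\rangle=F$ plus transitivity of $\langle A,B,C\rangle$ on dyadic points'' via (5) is clean and correct, but the actual construction of the partial rotations $C_n$ as words in $A,B,C$ is asserted rather than carried out, and that is where the real (if routine) work lives. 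Part (2) is, exactly as in the paper, an appeal to \cite{brown}.

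The one step that does not work as stated is in your part (3), step (ii): the claim that the subgroups $T_J$ generate $T$ because one can ``split any $g$ along a dyadic subdivision fine enough that no interval is moved across the cut.'' A fixed-point-free element such as the rotation by $1/2$ moves every interval across any cut, no matter how fine the subdivision, so this fragmentation argument fails. (There is also a definitional wrinkle: $T_J\cong[F,F]$ only if ``supported in $J$'' means support compactly contained in the open arc; if it merely means ``identity outside $J$'' the group is a full copy of $F$, which is not simple.) The generation statement is true, but proving it before knowing simplicity requires a different route --- e.g.\ write $g$ as a product of two elements each fixing a dyadic point and then deal with the germs at the fixed points, which is exactly the obstruction coming from $F^{\mathrm{ab}}\cong\Z^2$. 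The proof in \cite{cannon-floyd-parry} sidesteps fragmentation entirely: it shows via the double-commutator lemma that every nontrivial normal subgroup of $T$ contains $[T,T]$ and then computes $T^{\mathrm{ab}}=1$. Your steps (i), (iii), (iv) are the right skeleton; replacing (ii) by that argument would make the sketch sound. Note, incidentally, that the proof of simplicity of $[T_B,T_B]$ in Section~8 of this paper is the same Epstein-style argument you are sketching, so getting this step right here is worth the effort.
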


\bigskip
\section{The Basilica}

In this section, we briefly review the definition of the basilica Julia set and its relation to the basilica lamination. See~\cite{thurston} for further information.

Let $p\colon\C\to\C$ be the polynomial function $p(z)=z^2 - 1$.  If $z \in \C$, the \newword{orbit} of~$z$ under $p$ is the sequence
\[
z,\quad p(z), \quad p(p(z)), \quad p(p(p(z))), \quad \ldots\text{.}
\]
The \newword{filled Julia set} for $p$ is the subset of the complex plane consisting of all points whose orbits remain bounded under~$p$. This set is shown in Figure~\ref{BasilicaPictures}A.  The \newword{Julia~set} for $p$ is the topological boundary of the filled Julia set, as shown in Figure~\ref{BasilicaPictures}B.  This Julia set is known as the \newword{basilica}, and will be denoted by the letter~$B$.
\begin{figure}[tb]
\centering
\subfloat[The filled Julia set]{\fbox{\includegraphics{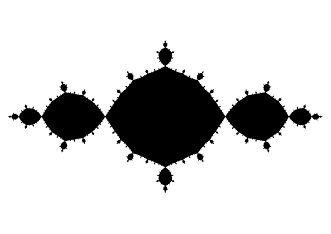}}}
\hfill
\subfloat[The basilica]{\fbox{\includegraphics{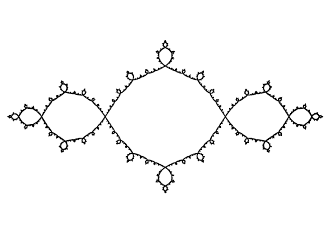}}}
\caption{The filled Julia set and Julia set for $z^2-1$.}
\label{BasilicaPictures}
\end{figure}

The complement of the basilica consists of an exterior region, together with infinitely many interior regions, which we refer to as \newword{components} of the basilica.  Each of these components is a topological disk, and two of these components are said to be \newword{adjacent} if they have a boundary point in common.  Among the components, the most important is the large \newword{central component}, which contains the origin of the complex plane.

\begin{figure}[t]
\centering
\includegraphics{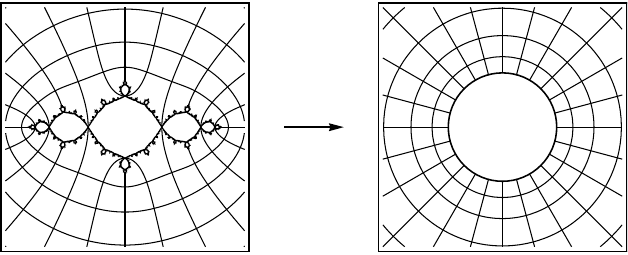}
\caption{The B\"{o}ttcher map $\Phi$ from the exterior of the basilica to the complement of the closed unit disk.}
\label{RiemannMap}
\end{figure}
The exterior region $E$ for the basilica is an open annulus.  By the Riemann Mapping Theorem, this region is conformally equivalent to the complement $\C\backslash D^2$ of the closed unit disk.  Indeed, there exists a unique conformal homeomorphism $\Phi\colon E \to \C\backslash D^2$ making the following diagram commute:
\[
    \xymatrix@R=0.4in@C=0.2in@M=0.5em{
    E \ar[r]^p\ar[d]_{\Phi} & E\ar[d]^{\Phi} \\
    \C\backslash D^2 \ar[r]_{z^2} & \C\backslash D^2
    }
\]
The homeomorphism $\Phi$ is known as the \newword{B\"{o}ttcher map} for the basilica.  A sketch of this map is shown in Figure~\ref{RiemannMap}.

The preimages of radial lines under the B\"{o}ttcher map are known as \newword{external rays}.  Several of these are shown in Figure~\ref{ExternalRays}, with each ray labeled by the corresponding angle.  (By convention, we label angles with real numbers from~$0$ to~$1$.)
\begin{figure}[b]
\centering
\includegraphics{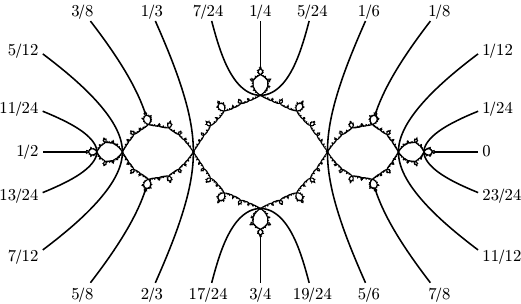}
\caption{External rays for the basilica.  Each ray is labeled by the angle of the corresponding radial segment.}
\label{ExternalRays}
\end{figure}
Each external ray lands at a point on the basilica, which defines a continuous surjection $\psi$ from the unit circle $S^1$ to the basilica~$B$.  This map fits into a commutative diagram:
\[
    \xymatrix@=0.3in@M=0.5em{
    S^1 \ar[r]^{z^2}\ar[d]_{\psi} & S^1\ar[d]^{\psi} \\
    B \ar[r]_p & B
    }
\]
The map $\psi$ is not one-to-one.  For example, the external rays for $1/3$ and $2/3$ end at the same point of the basilica (see Figure~\ref{ExternalRays}), and therefore $\psi(1/3) = \psi(2/3)$.

Since the circle is compact, the map $\psi$ is a quotient map, and therefore the basilica is a quotient of the circle.  The corresponding equivalence relation can be described using an \newword{invariant lamination} (see Figure~\ref{ArcDiagram}), which we refer to as the \newword{basilica lamination}.  This lamination consists of the closed unit disk~$D^2$, together with a hyperbolic arc (or \textit{leaf}) connecting each pair of points on the boundary circle that are identified in the basilica.  For example, there is an arc from $1/3$ to $2/3$ in the basilica lamination because the external rays for these angles land at the same point.
More generally, there is an arc in the lamination between the points
\[
\frac{3k - 1}{3\cdot 2^n} \qquad\text{and}\qquad \frac{3k + 1}{3\cdot 2^n}
\]
for all $k\in\Z$ and $n\geq 0$.  Any homeomorphism of the circle that preserves the equivalence relation defined by these arcs descends to a homeomorphism of the basilica Julia set.
\begin{figure}[tb]
\centering
\includegraphics{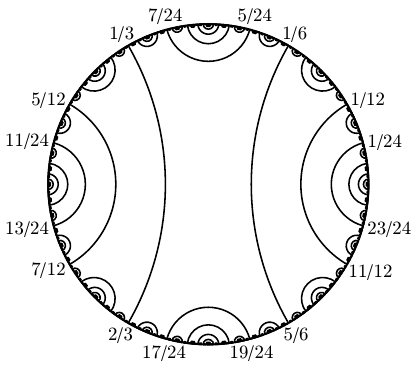}
\caption{The basilica lamination.}
\label{ArcDiagram}
\end{figure}

The complementary components of the basilica lamination are known as \newword{gaps}.  Each of these corresponds to a component of the basilica.  Indeed, the filled Julia set can be described as the quotient of the closed unit disk obtained by contracting each arc of the lamination to a single point.  Note then that two gaps correspond to adjacent components if and only if the closures of the gaps intersect along an arc of the lamination.

\bigskip
\section{Dyadic Rearrangements of the Basilica}
Consider the arcs $\{1/3,2/3\}$ and $\{1/6,5/6\}$ in the unit disk:
\begin{center}
\includegraphics{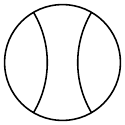}
\end{center}
These arcs divide the unit circle into four subintervals, namely $[1/6,1/3]$, $[1/3,2/3]$, $[2/3,5/6]$,  and $[-1/6, 1/6]$.  Each of these intervals supports its own \newword{primary arc}, which subdivides the interval with ratios $1:2:1$.  We add these arcs to our diagram:
\begin{center}
\includegraphics{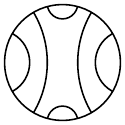}
\end{center}
Together, these six arcs divide the circle into twelve subintervals.  Again, each of these intervals supports a primary arc, and we add some of these arcs to our diagram:
\begin{center}
\includegraphics{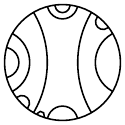}
\end{center}
Continuing in this way, we obtain a finite collection of arcs, which we refer to as an \newword{arc diagram}.  By definition, every arc diagram must include at least the two arcs $\{1/3,2/3\}$ and $\{1/6,5/6\}$.

Any arc diagram divides the unit circle into finitely many intervals, each of which has the form
\[
\left[\frac{3k+1}{3\cdot 2^n},\,\frac{3k+2}{3\cdot 2^n}\right] \qquad\text{or}\qquad \left[\frac{3k-1}{3\cdot 2^{n+1}},\,\frac{3k+1}{3\cdot 2^{n+1}}\right]
\]
for some $k,n\in\{0,1,2,\ldots\}$.  We refer to intervals of this form as \newword{standard intervals}.  It is easy to see that any subdivision of the unit circle into standard intervals can be obtained from some arc diagram.

We can think of an arc diagram as a $2$-complex, with the arcs and intervals as edges and the regions as faces.  We say that two arc diagrams are \newword{isomorphic} if there exists an orientation-preserving isomorphism between the corresponding complexes.

Given an isomorphism $\varphi\colon\mathcal{D}\to\mathcal{R}$ between two arc diagrams, we can define a piecewise-linear homeomorphism $f\colon S^1\to S^1$ by sending each interval of $\mathcal{D}$ linearly to the corresponding interval of $\mathcal{R}$.  Note that such a homeomorphism necessarily preserves the equivalence relation on the circle defined by the basilica lamination, and therefore acts as a self-homeomorphism of the Julia set.  We will refer to such a homeomorphism as a \newword{dyadic rearrangement} of the basilica.  In Section~5 we will prove that the set of dyadic rearrangements of the basilica forms a group.

\begin{figure}[t]
\centering
\includegraphics{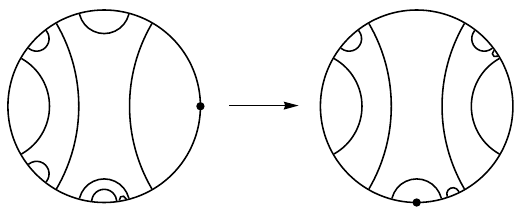}
\caption{An arc pair diagram.}
\label{Isomorphic}
\end{figure}
Figure~\ref{Isomorphic} shows an isomorphism between two arc diagrams, which in turn defines a dyadic rearrangement of the basilica.  We will refer to a picture of this kind as an \newword{arc pair diagram}.  Note that we have marked a corresponding pair of points in the domain and range to specify the isomorphism.

It may seem odd to refer to these homeomorphisms as ``dyadic rearrangements'', since their endpoints are not dyadic fractions.  However, their endpoints have denominators of the form $3\cdot 2^n$, and it is easy to verify that the slopes of a dyadic rearrangement are always powers of two.  Indeed, the dyadic rearrangements of the basilica are all contained in a certain conjugate copy of Thompson's group $T$ acting on the unit circle (see Remark~\ref{rmk:ContainedInT} below).

In Section~7, we will prove that the group of dyadic rearrangements of the basilica is generated by four elements $\alpha$, $\beta$, $\gamma$, and $\delta$.  The following example introduces these elements.

\begin{example}
\label{ex:generators}
\begin{figure}[tbp]
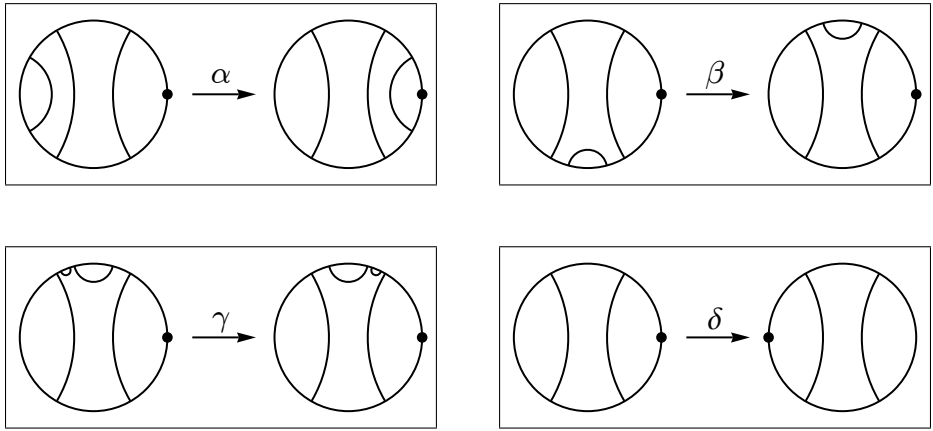

\centering
\fbox{\includeapd{AlphaPicture}}\qquad\fbox{\includeapd{BetaPicture}}\\[12pt]
\fbox{\includeapd{GammaPicture}}\qquad\fbox{\includeapd{DeltaPicture}}
\caption{Arc pair diagrams for $\alpha$, $\beta$, $\gamma$, and $\delta$.}
\label{ArcPairGens}
\end{figure}
Let $\alpha$, $\beta$, $\gamma$, and $\delta$ be the dyadic rearrangements defined by the arc pair diagrams in Figure~\ref{ArcPairGens}.  The following table lists the breakpoints for these homeomorphisms:
\[
\renewcommand{\arraystretch}{1.5}
\begin{tabular}{c @{\hspace{.2 in}} c @{\hspace{.2 in}} c}
\hline
Name & Domain Breakpoints & Range Breakpoints\\
\hline
$\alpha$ & 1/3, 2/3 & 1/6, 5/6\\
$\beta$ & 1/6, 2/3, 17/24, 5/6 &  1/6, 7/24, 1/3, 5/6 \\
$\gamma$ & 1/6, 7/24, 29/96, 1/3 & 1/6, 19/96, 5/24, 1/3 \\[3 pt]
\hline
\end{tabular}
\]
The homeomorphism $\delta$ corresponds to the linear function $\theta \mapsto \theta + 1/2$, which has no breakpoints.  The graphs of $\alpha$, $\beta$, and $\gamma$ are shown in Figure~\ref{GraphsGens}.

\begin{figure}[p]
\centering
$\underset{\textstyle\alpha}{\includegraphics{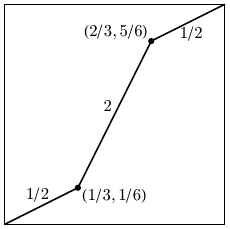}}
\hfill
\underset{\textstyle\beta}{\includegraphics{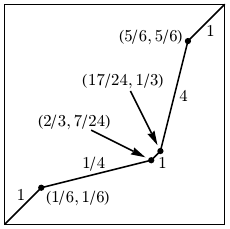}}
\hfill
\underset{\textstyle\gamma}{\includegraphics{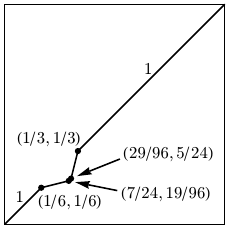}}$
\vspace{-1ex}\caption{Graphs of $\alpha$, $\beta$, and $\gamma$.\\ ~ \\ ~ \\}
\label{GraphsGens}
\end{figure}
\begin{figure}[p]
\centering
\includegraphics{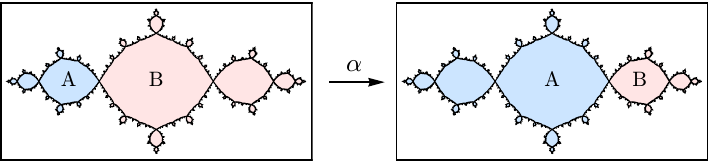} \\[8pt]
\includegraphics{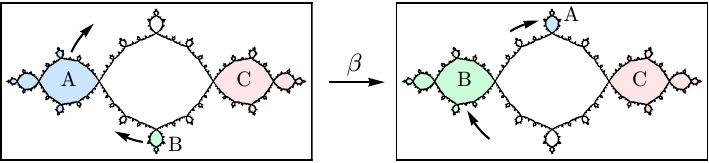} \\[8pt]
\includegraphics{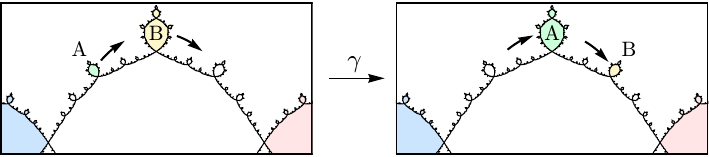} \\[8pt]
\includegraphics{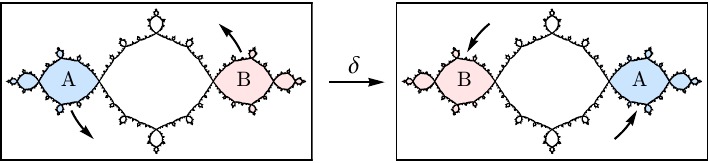}
\caption{The actions of $\alpha$, $\beta$, $\gamma$, and $\delta$ on the basilica.}
\label{ActionOnBasilica}
\end{figure}
Figure \ref{ActionOnBasilica} shows the actions of $\alpha$, $\beta$, $\gamma$ and $\delta$ on the basilica.  Geometrically, $\alpha$~fixes the right and left endpoints of the basilica, and slides each component along the real axis one step to the right.  Both $\beta$ and $\gamma$ permute the components adjacent to central component, stretching and compressing portions of the boundary of the central component as needed.  For example, $\beta$ shrinks the top portion of the basilica that connects C to A in Figure~\ref{ActionOnBasilica}, and expands the lower-right portion that stretches from B to~C, sliding the portion connecting A and~B from the lower left to the top left.  The element $\gamma$ acts in a similar way, but is supported entirely on the top portion of the basilica. Finally, $\delta$ acts as a $180^\circ$ rotation of the basilica around the origin.

Note that $\beta$, $\gamma$, and $\delta$ act on the boundary of the central component in the same way that generators for Thompson's group~$T$ act on the unit circle (see~Figure~\ref{fig:TGenerators}).  Indeed, we will show in Section~\ref{CentCompSect} that $\beta$, $\gamma$, and $\delta$ generate an isomorphic copy of Thompson's group~$T$.
\end{example}

\bigskip
\section{The Group $T_B$}
Let $T_B$ be the set of all dyadic rearrangements of the basilica.  In this section, we characterize the elements of $T_B$ among all piecewise-linear homeomorphisms of the circle.  Using this characterization, we will prove that $T_B$ forms a group under composition.

As we have seen, every element of $T_B$ can be defined using an arc pair diagram.  However, the arc pair diagram for a given dyadic rearrangement is not necessarily unique.

\begin{definition}
An \newword{expansion} of an arc pair diagram is obtained by attaching primary arcs to a corresponding pair of intervals in the domain and range (see Figure~\ref{ReduceExpand}).
The inverse of an expansion is called a \newword{reduction}.  An arc pair diagram is \newword{reduced} if it is not subject to any reductions.
\end{definition}

Expansions and reductions do not change the underlying piecewise-linear homeo\-morphism---they simply correspond to adding or removing unnecessary subdivisions of the domain and range intervals.

\begin{proposition}
Every element of $T_B$ has a unique reduced arc pair diagram.
\end{proposition}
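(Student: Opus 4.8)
The plan is to show that, for a fixed $f \in T_B$, the arc pair diagrams representing $f$ are precisely the expansions of a single reduced one, which is moreover determined intrinsically by $f$. I would first observe that in an arc pair diagram $(\mathcal{D},\mathcal{R},\varphi)$ the isomorphism $\varphi$ is forced by the underlying homeomorphism $f$: each interval $I$ of $\mathcal{D}$ must be sent to the interval $f(I)$ of $\mathcal{R}$, and this pins down $\varphi$ on the arcs and faces as well. So an arc pair diagram representing $f$ amounts to nothing more than a choice of domain arc diagram $\mathcal{D}$ such that $f$ carries the intervals of $\mathcal{D}$ linearly onto the intervals of some arc diagram; call such a $\mathcal{D}$ \emph{$f$-admissible}. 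Existence of a reduced representative is then immediate: $f$ is a dyadic rearrangement, hence has some arc pair diagram, and each reduction deletes one arc from the domain, so after finitely many reductions one reaches a reduced diagram.

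The heart of the proof is a criterion for reducibility. For a standard interval $I$, I will call its primary arc together with the three subintervals it creates a \emph{caret over $I$}, and say that an arc diagram $\mathcal{D}$ \emph{contains} this caret if the primary arc of $I$ is one of the arcs of $\mathcal{D}$ and the three subintervals are intervals of $\mathcal{D}$. I would then prove: the arc pair diagram coming from an $f$-admissible $\mathcal{D}$ admits a reduction if and only if $\mathcal{D}$ contains a caret over some standard interval $I$ on which $f$ is linear, and in that case the reduction simply deletes this caret from $\mathcal{D}$ and the corresponding caret from $\mathcal{R}$. The ``only if'' direction is essentially the definition of a reduction, together with the fact that expansions and reductions do not change the underlying homeomorphism. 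For ``if'', note that since $f$ is linear on $I$ it sends the primary arc of $I$ to the primary arc of the interval $f(I)$ and the three subintervals of the caret to the three pieces cut off by that arc, in the same ratios $1:2:1$; since the primary arc of $f(I)$ is the image under $\varphi$ of the primary arc of $I$ and hence is an arc of $\mathcal{R}$, the interval $f(I)$ is standard and $\mathcal{R}$ contains a caret over it, so the reduction can be performed on both sides at once.

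For uniqueness, I would argue by contradiction. Suppose $\mathcal{D}_1$ and $\mathcal{D}_2$ are $f$-admissible and reduced but induce different partitions of $S^1$. Using that the standard intervals form a laminar family — any two are nested or have disjoint interiors, and every subdivision of a standard interval into standard intervals other than the trivial one refines the subdivision into its three children — one sees, after possibly interchanging the two diagrams, that some interval $J$ of $\mathcal{D}_2$ is subdivided in $\mathcal{D}_1$. Since $J$ is an interval of the $f$-admissible diagram $\mathcal{D}_2$, the map $f$ is linear on $J$. Now consider all standard intervals that are subdivided in $\mathcal{D}_1$ and on which $f$ is linear; this set is finite and nonempty (it contains $J$), so it has an inclusion-minimal member $K$. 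No child of $K$ can be subdivided in $\mathcal{D}_1$, since such a child would be a smaller member of the set, so the three children of $K$ are intervals of $\mathcal{D}_1$ and, $K$ being subdivided, its primary arc is an arc of $\mathcal{D}_1$. Thus $\mathcal{D}_1$ contains a caret over $K$ on which $f$ is linear, so by the criterion the diagram from $\mathcal{D}_1$ is reducible, contradicting our hypothesis. Hence $\mathcal{D}_1 = \mathcal{D}_2$, whence $\mathcal{R}_1 = f(\mathcal{D}_1) = f(\mathcal{D}_2) = \mathcal{R}_2$ and $\varphi_1 = \varphi_2$, so the two reduced arc pair diagrams coincide.

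The main obstacle I anticipate is the combinatorial bookkeeping behind the two structural facts used above without proof: that the standard intervals form a laminar family in which each standard interval has a unique parent (so that a standard interval can only be subdivided by first passing to its three children), and that the primary arc of a standard interval is genuinely a leaf of the Basilica lamination with its three pieces again standard (so that the arcs of an arc diagram are exactly the primary arcs of its subdivided standard intervals). Both reduce to direct computations with the explicit formulas $\tfrac{3k\pm1}{3\cdot 2^n}$ for leaves and $\bigl[\tfrac{3k+1}{3\cdot 2^n},\tfrac{3k+2}{3\cdot 2^n}\bigr]$, $\bigl[\tfrac{3k-1}{3\cdot 2^{n+1}},\tfrac{3k+1}{3\cdot 2^{n+1}}\bigr]$ for standard intervals, but they must be checked carefully; everything after that is short. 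A confluence argument based on common expansions of arc diagrams would also give uniqueness, but the minimal-interval argument above avoids it.
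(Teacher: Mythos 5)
Your proof is correct, but it is organized rather differently from the paper's. The paper's argument calls a standard interval $I$ \emph{regular} (for a fixed $f$) if $f$ is linear on $I$ and $f(I)$ is again standard, observes that an arc pair diagram for $f$ is reduced exactly when every domain interval is a \emph{maximal} regular interval, and then gets uniqueness in one stroke from the laminarity of standard intervals: maximal regular intervals have pairwise disjoint interiors, so there is only one subdivision of the circle into them. You instead work ``bottom-up'': you characterize reducibility by the presence of a caret over an interval on which $f$ is linear, and derive uniqueness by extracting, from any two distinct reduced domain diagrams, an inclusion-minimal subdivided interval whose caret must then be removable. Both arguments ultimately rest on the same two combinatorial facts about standard intervals (the laminar/tree structure, and the identification of the arcs of an arc diagram with primary arcs of subdivided standard intervals), which you correctly flag as the computations that must be checked against the explicit formulas --- the paper asserts the laminarity without proof as well, so you are at a comparable level of rigor. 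What the paper's maximality formulation buys is brevity: uniqueness of the tiling by maximal regular intervals is immediate. What your formulation buys is an explicit description of the reduction moves (which caret gets deleted and why the corresponding caret exists on the range side), and it makes transparent that every diagram for $f$ reduces to the same reduced one without needing a separate confluence argument. Either write-up would be acceptable; if you adopt yours, do include the verification that the primary arc of a standard interval is never one of the two mandatory arcs $\{1/3,2/3\}$, $\{1/6,5/6\}$, so that deleting a caret always leaves a legitimate arc diagram.
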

\begin{proof} Let $f \in T_B$.  Given a standard interval $I$, we say that $I$ is  \textit{regular} if $f$ is linear on~$I$, and the image $f(I)$ is also a standard interval.  The domain intervals of any arc pair diagram for $f$ must be regular, and any subdivision of the domain into regular intervals determines an arc pair diagram.  In particular, an arc pair diagram for $f$ is reduced if and only if each of the regular intervals in its domain is maximal under inclusion.

Now, for any pair of standard intervals, either one is contained in the other or the two intervals have disjoint interiors.  It follows that any two maximal regular intervals have disjoint interiors, so there can be only one subdivision of the circle into maximal regular intervals.
\end{proof}

\begin{figure}[tb]
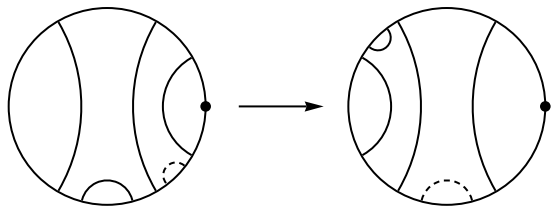

\centering
\includeapd{Expansion}
\caption{An expansion of an arc pair diagram.  The arcs to be added are drawn as dashed lines.}
\label{ReduceExpand}
\end{figure}
Given the reduced arc pair diagrams for two dyadic rearrangements $f,g \in T_B$, there is a simple algorithm to compute an arc pair diagram for the composition $g\circ f$, as illustrated in the following example.

\begin{example}
Suppose we wish to compute an arc pair diagram for the composition $\beta\circ\alpha$, starting with the arc pair diagrams for $\alpha$ and $\beta$ shown in Figure~\ref{ArcPairGens}.  The first step is to expand the arc pair diagram for $\alpha$ until the range diagram for $\alpha$ contains the domain diagram for~$\beta$:
\begin{center}
\includeapd{CompositionExpandedAlpha}
\quad\quad
\includeapd{CompositionPlainBeta}
\end{center}
Next, we expand the arc pair diagram for $\beta$ so that the domain diagram for $\beta$ is the same as the range diagram for $\alpha$:
\begin{center}
\includeapd{CompositionExpandedAlpha}
\quad\quad
\includeapd{CompositionExpandedBeta}
\end{center}
Finally, we construct an arc pair diagram for $\beta\circ\alpha$ using the domain diagram for $\alpha$ and the range diagram for~$\beta$:
\begin{center}
\includeapd{CompositionResult}
\end{center}
The isomorphism between these two arc diagrams is obtained by composing the isomorphisms from the expanded arc pair diagrams for $\alpha$ and~$\beta$.
\end{example}

It is possible to use the above algorithm to prove that $T_B$ is a group.  However, we will take a different approach.

\begin{theorem}\label{GroupTB}Let $f$ be a piecewise-linear homeomorphism of the circle.  Then $f$ induces a dyadic rearrangement of the basilica if and only if it satisfies the following requirements:
\begin{enumerate}
\item The equivalence relation on the circle defined by the basilica lamination is invariant under~$f$.\smallskip
\item Every breakpoint of $f$ is the endpoint of an arc in the lamination.
\end{enumerate}
In particular, $T_B$ forms a group under composition.
\end{theorem}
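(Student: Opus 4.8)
The plan is to prove the two implications separately and then read off the group property.

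The ``only if'' direction is straightforward. If $f$ is the dyadic rearrangement determined by an isomorphism $\varphi\colon\mathcal{D}\to\mathcal{R}$ of arc diagrams, then its breakpoints lie among the vertices of $\mathcal{D}$, each of which is an endpoint of an arc and hence of a leaf, so (2) holds. For (1), given an arbitrary leaf $\ell$ of the lamination, I would repeatedly expand the arc pair diagram until $\ell$ is one of the arcs of the expanded domain diagram---always possible, since every leaf is either one of the two initial arcs or occurs as the primary arc of some standard interval---whereupon the expanded isomorphism visibly carries $\ell$ to a leaf. Applying this to $f^{-1}$ as well (which is again a dyadic rearrangement) shows the lamination is $f$-invariant.

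The ``if'' direction is the substance of the theorem. Assume $f$ satisfies (1) and (2). The goal is to produce an arc diagram $\mathcal{D}$ on each of whose intervals $f$ is linear with standard image; the images then form an arc diagram $\mathcal{R}$ (since any subdivision into standard intervals is an arc diagram), and the resulting combinatorial data recovers $f$. Call a standard interval \emph{regular} if $f$ restricts to a linear map on it whose image is again a standard interval. The crux is a local claim: \emph{every point $x$ of $S^1$ lies in a regular standard interval}---in its interior if $x$ is not a leaf endpoint, and as a common endpoint of regular intervals on either side if it is. To prove it, I would choose a standard interval $I$, with $x$ in its interior (respectively, as an endpoint), small enough that both $I$ and its primary arc $\pi(I)$ lie inside a single linear segment of $f$, and small enough that $f$ does not carry $\pi(I)$ onto either of the two initial leaves. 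Writing $s$ for the slope of $f$ on that segment, $f(\pi(I))$ is then a leaf, necessarily of the form $\pi(J)$ for a standard interval $J$; since the distance between its endpoints equals $s\cdot\mathrm{length}(\pi(I))$ while leaf lengths are exactly the numbers $1/(3\cdot 2^m)$, the slope $s$ must be a power of $2$; and then---the delicate step---the rigidity of the $1:2:1$ subdivision, which reconstructs a standard interval from the span of its primary arc, forces $f(I)$ to coincide with $J$. Thus $I$ is regular.

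Granting the claim, the maximal regular standard intervals have pairwise disjoint interiors and cover $S^1$, so they are finite in number and constitute an arc diagram $\mathcal{D}$, and their $f$-images constitute an arc diagram $\mathcal{R}$. Then $f$ induces an orientation-preserving bijection of intervals and of vertices, and---using the observation that a leaf is an arc of a given arc diagram precisely when both of its endpoints are vertices of the diagram, together with the fact that $f$ carries leaves to leaves and $\mathcal{D}$-vertices to $\mathcal{R}$-vertices---of arcs; this data is an isomorphism $\mathcal{D}\to\mathcal{R}$ whose associated dyadic rearrangement is $f$, so $f\in T_B$. The group property follows formally from the characterisation: conditions (1) and (2) pass to composites and inverses, since the breakpoints of $g\circ f$ lie among the breakpoints of $f$ together with the $f$-preimages of the breakpoints of $g$, and a lamination-preserving homeomorphism sends leaf endpoints to leaf endpoints. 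I expect the hardest point to be the local claim, and within it the passage from ``the slope is a power of $2$'' to ``$f(I)$ is literally a standard interval'': this is exactly where the precise nesting combinatorics of leaves and standard intervals in the Basilica lamination is used, and keeping the bookkeeping of the $1:2:1$ ratios correct---including the need to shrink $I$ past the two exceptional initial leaves---is the one genuinely technical part.
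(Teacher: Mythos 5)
Your proof is correct, and its skeleton matches the paper's: both arguments produce a subdivision of the circle into standard intervals on which $f$ is linear with standard images, and then read off an isomorphism of arc diagrams. The mechanism for the key step differs, though. The paper argues globally: since each linear segment of $f$ must preserve the set of leaf endpoints, it must have the explicit form $\theta\mapsto 2^j(\theta+n/2^m)$; one then takes a uniform dyadic subdivision of mesh smaller than $1/2^M$ and the rest is immediate. You argue locally: the slope is a power of $2$ because the endpoint-gaps of leaves are exactly the numbers $1/(3\cdot 2^m)$, and $f(I)$ is standard because $f$ carries the primary arc $\pi(I)$ to a non-initial leaf, hence to the primary arc $\pi(J)$ of a unique standard interval $J$, after which linearity and the $1{:}2{:}1$ ratios force $f(I)=J$. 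Your route buys a little more---the maximal regular intervals hand you the \emph{reduced} arc pair diagram directly, reproving the paper's uniqueness proposition along the way---at the cost of more bookkeeping. Two facts you lean on deserve explicit justification: that every non-initial leaf is the primary arc of a unique standard interval, and that a leaf is an arc of a given diagram precisely when both of its endpoints are vertices of that diagram. Both follow from the fact that each endpoint of a leaf of the Basilica lamination belongs to exactly one leaf (each pinch point is the landing point of exactly two external rays); but note that the paper's displayed formula for the arcs, read literally for all $k$, would also admit chords such as $\{1/6,1/3\}$ that are not leaves, and those would falsify your ``both endpoints are vertices'' criterion---you need the arcs as generated by the iterated primary-arc construction. (Like the paper, you should also read ``homeomorphism'' as ``orientation-preserving homeomorphism'' throughout; otherwise $\theta\mapsto-\theta$ satisfies (1) and (2) without being a dyadic rearrangement.)
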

\begin{proof}
The forward direction follows from the definition of a dyadic rearrangement of the basilica.  For the converse, recall that the endpoints of the arcs of the basilica lamination all have the form
\[
\frac{k+1}{3\cdot 2^m} \qquad\text{or}\qquad \frac{k+2}{3\cdot 2^m},\tag*{(1)}
\]
where $k\in\Z$ and $m\in\{0,1,2,\ldots\}$.  Each linear segment of $f$ must preserve this set of points, and must therefore have the form
\[
\theta \;\mapsto\; 2^j \left(\theta + \frac{n}{2^m}\right)\tag*{(2)}
\]
for some $j,m,n\in\Z$.

Now, let $M$ be the maximum value of $m$ that appears in formulas (1) and (2) for the breakpoints and linear segments of~$f$. Let $\mathcal{D}$ be any dyadic subdivision of the unit circle into standard intervals of length less than~$1/2^M$.  Then $f$ must be linear on each interval of $\mathcal{D}$, and each of these intervals maps to a standard interval under~$f$.  The images of the intervals of $\mathcal{D}$ form a dyadic subdivision $\mathcal{R}$ of the range, and $f$ maps linearly between the intervals of $\mathcal{D}$ and~$\mathcal{R}$.  Moreover, since $f$ preserves the arcs of the basilica lamination, $f$~must induce an isomorphism between the arc diagrams for $\mathcal{D}$ and~$\mathcal{R}$, and therefore $f$ is a dyadic rearrangement.
\end{proof}

\begin{remark}\label{rmk:ContainedInT}As discussed in the proof of this theorem, each linear segment of an element of~$T_B$ has the form
\[
\theta \;\mapsto\; 2^n \theta + d
\]
where $n\in\Z$ and $d$ is a dyadic rational.  However, the breakpoints of such an element are not dyadic rationals, so elements of $T_B$ do not act in the circle in the same way as elements of Thompson's group~$T$.

On the other hand, the group $T_B$ is isomorphic to a subgroup of $T$. In particular, consider the group $T(3)$ of all piecewise-linear homeomorphisms of the circle satisfying the following conditions:
\begin{enumerate}
\item The derivative of each linear segment has the form $2^m$ for some $m\in\Z$.
\item The coordinates of each breakpoint have the form $\dfrac{k}{3\cdot 2^n}$ for some $k,n\in\Z$.
\end{enumerate}
Then $T(3)$ is isomorphic to Thompson's group~$T$, since it is conjugate to $T$ in the group of all piecewise-linear homeomorphisms of the circle.  Each element of~$T_B$ acts on the circle as an element of $T(3)$, and therefore $T_B$ is isomorphic to a subgroup of Thompson's group~$T$.

More explicitly, $T_B$ is isomorphic to the subgroup of $T$ consisting of all elements that preserve the equivalence relation on the dyadics corresponding to the lamination shown in Figure~\ref{DyadicLamination}.
\begin{figure}[t]
\centering
\includegraphics{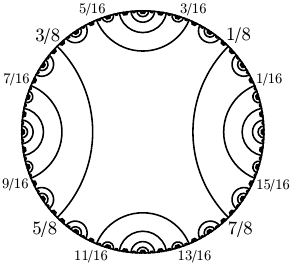}
\caption{A lamination whose arcs have dyadic endpoints.  The corresponding subgroup of $T$ is isomorphic to~$T_B$.}
\label{DyadicLamination}
\end{figure}
\end{remark}

\bigskip
\section{The Central Component}\label{CentCompSect}

\begin{figure}[tb]
\centering
\subfloat[]{\includegraphics{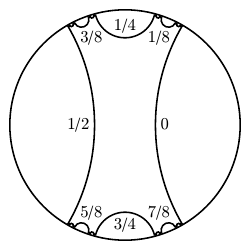}}
\qquad\qquad
\subfloat[]{\includegraphics{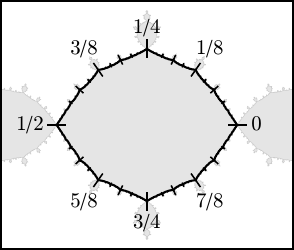}}
\caption{(A) Dyadic labels for the central arcs of the basilica lamination. (B) The corresponding dyadic points on~$\partial C$.}
\label{fig:CentralComponentPictures}
\end{figure}
Consider the central component $C$ of the basilica Julia set.  This component is a quotient of the \newword{central gap} in the basilica lamination, i.e.~the gap containing the center point of the disk.  In particular, $C$ is obtained from the central gap by collapsing each of the infinitely many \newword{central arcs} that surround this gap.  We can label these central arcs using the dyadic rationals, as shown in Figure ~\ref{fig:CentralComponentPictures}A.

Each central arc corresponds to a single point in the boundary $\partial C$ of~$C$ the central component.  These points are dense in $\partial C$, as shown in Figure~\ref{fig:CentralComponentPictures}B.  Therefore, the labeling of the central arcs by dyadics induces a well-defined homeomorphism from $\partial C$ to the unit circle.  In particular, we obtain a canonical action of Thompson's group $T$ on~$\partial C$

\begin{remark}It is possible to use the Riemann Mapping Theorem to define the homeomorphism $\partial C \to S^1$ directly, without reference to the invariant lamination.  In particular, consider the Riemann map from $C$ to the unit disk that sends $0$ to $0$ with derivative $1$, as shown in Figure~\ref{fig:CenterBulbRiemann}.  This map extends to the boundary, and the resulting homeomorphism from $\partial C$ to the circle is the same as the one defined above.
\end{remark}

As we shall see, the action of Thompson's group $T$ on $\partial C$ is closely related to the action of $T_B$ on the basilica.  In particular, consider the following two subgroups of~$T_B$:

\begin{definition}\quad
\begin{enumerate}
\item The \newword{stabilizer} of $C$, denoted $\stab(C)$, is the group of all elements of $T_B$ that map the central component $C$ to itself.\smallskip
\item The \newword{rigid stabilizer} of $C$, denoted $\rist(C)$, is the group of all elements of $T_B$ that have an arc pair diagram consisting solely of central arcs.
\end{enumerate}
\end{definition}
\begin{figure}[t]
\centering
\includegraphics{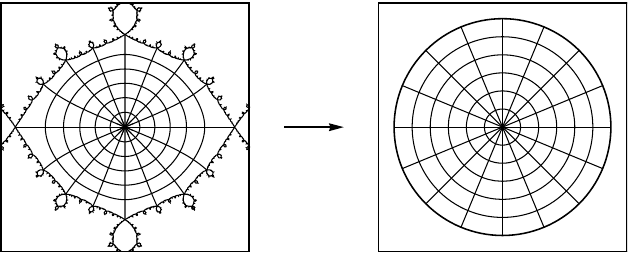}
\caption{A Riemann map on the central component.}
\label{fig:CenterBulbRiemann}
\end{figure}

Every element of the stabilizer maps $C$ to itself, and therefore the group $\stab(C)$ acts on $\partial C$ by homeomorphisms.  The rigid stabilizer $\rist(C)$ is a subgroup of $\stab(C)$.  Though we have defined it using arc pair diagrams, $\rist(C)$ can also be characterized as the group all elements of $\stab(C)$ that extend conformally to the interiors of the non-central components of~$B$. It is ``rigid'' in the sense that an element of $\rist(C)$ is determined entirely by its action on~$\partial C$.

The following theorem describes the action of these groups on the boundary of the central component.

\begin{theorem}\label{thm:MapsToT}
Each element of\/ $\stab(C)$ acts on\/ $\partial C$ as an element of Thompson's group~$T$.  In particular, $\rist(C)$ acts on $\partial C$ as an isomorphic copy of $T$.
\end{theorem}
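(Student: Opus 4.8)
The plan is to set up a dictionary between arc pair diagrams for elements of $\stab(C)$ and pairs of dyadic subdivisions of the circle, mediated by the central arcs of the Basilica lamination.

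First I would observe that every $f\in\stab(C)$ carries the central gap to itself. The central gap is the unique gap of the lamination whose image under $\psi$ is the closure of $C$, and since $\psi$ intertwines the action of $f$ on $S^1$ with its action on $B$, which fixes the closure of $C$, the gap $f(\text{central gap})$ again maps onto the closure of $C$ and hence equals the central gap. Therefore $f$ permutes the central arcs, and, being orientation-preserving, it preserves their cyclic order. Next, for an arc diagram $\mathcal{D}$, call an interval of $\mathcal{D}$ \emph{central} if it lies on the boundary of the face of $\mathcal{D}$ containing the center of the disk. Starting from the minimal arc diagram, whose central arcs are $\{1/3,2/3\}$ and $\{1/6,5/6\}$ and whose central intervals are $[1/6,1/3]$ and $[2/3,5/6]$, an induction on expansions shows that attaching a primary arc to a central interval produces exactly one new central arc and replaces one central interval by two, while attaching a primary arc to a non-central interval leaves the central arcs untouched. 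It follows that the dyadic labels of the central arcs of $\mathcal{D}$ are precisely the endpoints of a dyadic subdivision $\mathcal{D}_C$ of the circle $S^1\cong\partial C$, namely the subdivision obtained from the trivial one by bisecting one interval each time a central interval of the arc diagram is subdivided; and, conversely, every dyadic subdivision of $\partial C$ arises this way. Under this correspondence, an isomorphism of arc diagrams that carries the central face to the central face restricts to an order-preserving bijection of central arcs, and hence to an order-preserving bijection between the intervals of $\mathcal{D}_C$ and those of $\mathcal{R}_C$, which is exactly the data of a dyadic rearrangement of the circle. Establishing this structural claim---that the central arcs of an arc diagram are governed by an honest dyadic subdivision of $\partial C$ on which expansions act by bisection---is the main obstacle; once it is in hand, what remains is routine bookkeeping with diagrams.

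Given $f\in\stab(C)$, I would choose any arc pair diagram $\varphi\colon\mathcal{D}\to\mathcal{R}$ for $f$. By the first observation $\varphi$ carries the central face of $\mathcal{D}$ to that of $\mathcal{R}$, so it determines an element $g_\varphi\in T$. Expanding $\varphi$ along a central interval refines $\mathcal{D}_C$ and $\mathcal{R}_C$ by compatible bisections and so leaves $g_\varphi$ unchanged, while expanding along a non-central interval changes neither subdivision; since any prescribed central arc can be introduced into the domain of $\varphi$ by finitely many expansions, with its image appearing automatically in the range, all the elements $g_\varphi$ coincide and define a single $g\in T$ satisfying $g(d)=d'$ whenever $f$ carries the central arc labeled $d$ to the one labeled $d'$. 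Unwinding the definition of the labeling homeomorphism $h\colon\partial C\to S^1$, this says precisely that $hfh^{-1}$ agrees with $g$ on every dyadic rational, and since both maps are continuous and the dyadic rationals are dense, $hfh^{-1}=g\in T$. Since $f\mapsto hfh^{-1}$ is visibly a homomorphism, $\stab(C)$ acts on $\partial C$ through $T$.

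For the second assertion, recall that $g\in T_B$ lies in $\rist(C)$ exactly when it admits an arc pair diagram all of whose arcs are central. Given $t\in T$, presented by a pair of dyadic subdivisions of the circle together with an order-preserving matching of their intervals, the dictionary above converts this data (expanding only along central intervals) into such a diagram, and hence into an element of $\rist(C)$ whose action on $\partial C$ is $t$; thus the homomorphism $\rist(C)\to T$ is surjective. It is injective because if $g\in\rist(C)$ acts trivially on $\partial C$, then in any central arc pair diagram for $g$ every central arc is sent to itself, so the defining isomorphism fixes every arc---and therefore every interval and face---whence $g=\mathrm{id}$.
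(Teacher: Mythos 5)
Your proposal is correct and follows essentially the same route as the paper's proof: both rest on the dictionary between the central arcs of an arc (pair) diagram and the cut points of a dyadic subdivision of $\partial C\cong S^1$, with expansions on central intervals corresponding to bisections, and both pass from agreement on the (dense) central arcs to agreement on all of $\partial C$ by continuity. The only differences are organizational---you treat a general element of $\stab(C)$ directly and verify well-definedness under expansion, where the paper first builds the isomorphism $\tau\colon\rist(C)\to T$ and then reduces $\stab(C)$ to $\rist(C)$ by deleting non-central arcs---and you supply some details (the induction on expansions, injectivity of $\rist(C)\to T$) that the paper leaves implicit.
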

\begin{proof}Consider first an element $f\in\rist(C)$.  For example, $f$ could be the following element:
\begin{center}
\includeapd{RigidStabilizerElement}
\end{center}
Note that $f$ must have the same number of central arcs in its domain and range diagrams.  These arcs correspond precisely to the cut points for two dyadic subdivisions of the circle:
\begin{center}
\includeapd{StabilizerImage}
\end{center}
This correspondence clearly defines an isomorphism $\tau\colon \rist(C) \to T$.

Next, observe that the action of an element $f\in\rist(C)$ on the central arcs on the basilica lamination is precisely the same as the action of $\tau(f)$ on the dyadic points of the circle.  Since the images of the central arcs are dense in $\partial C$, it follows that $f$ acts on $\partial C$ in exactly the same way as~$\tau(f)$.

Finally, observe that if $g$ is any element of $\stab(C)$, then $g$ acts on~$\partial C$ in exactly the same way as some element $f\in \rist(C)$.  Specifically, an arc pair diagram for $f$ can be obtained by removing all of the non-central arcs from an arc pair diagram for $g$.  For example, the following element of $\stab(C)$ acts on $\partial C$ in precisely the same way as the element $f\in \rist(C)$ shown above:
\begin{center}
\hfill\includeapd{StabilizerElement}\hfill\qedhere
\end{center}
\end{proof}

\begin{corollary}\label{cor:GensForT}The group $\rist(C)$ is generated by the elements $\beta$, $\gamma$, and $\delta$ defined in Example~\ref{ex:generators}
\end{corollary}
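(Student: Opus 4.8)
The plan is to transport the problem to Thompson's group $T$ using the isomorphism $\tau\colon\rist(C)\to T$ constructed in the proof of Theorem~\ref{thm:MapsToT}, and then to invoke the finite generation of $T$ from Theorem~\ref{thm:TFacts}(1). Before anything else I would confirm that $\beta$, $\gamma$, and $\delta$ really lie in $\rist(C)$: by Example~\ref{ex:generators} each of them fixes the central component (the diagrams for $\beta$ and $\gamma$ only rearrange components adjacent to $C$, and $\delta$ is a rotation of the Basilica about $0$), and each admits an arc pair diagram consisting solely of central arcs. For $\beta$ and $\gamma$ this is read off Figure~\ref{ArcPairGens} once one checks that every breakpoint listed in Example~\ref{ex:generators} is an endpoint of a central arc; for $\delta$ one may use the two-arc diagram in which the primary central arcs $\{1/3,2/3\}$ and $\{1/6,5/6\}$ are interchanged. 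Given this, it suffices to prove that $\tau(\beta)$, $\tau(\gamma)$, and $\tau(\delta)$ generate $T$.

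The heart of the argument is to compute these three elements explicitly. By the construction of $\tau$, one reads from the reduced arc pair diagram of each generator the central arcs appearing in its domain and range diagrams, converts them via the dyadic labeling of Figure~\ref{fig:CentralComponentPictures}A into a pair of dyadic subdivisions of $S^1$, and records the order-preserving bijection between them. I expect that the three elements of $T$ obtained in this way are exactly the three dyadic rearrangements of the circle pictured in Figure~\ref{fig:TGenerators} --- the generators $\beta$, $\gamma$, $\delta$ appear to have been chosen so that this holds, with $\tau(\delta)$ the rotation $\theta\mapsto\theta+1/2$, since $\delta$ swaps the two primary central arcs, which carry the labels $0$ and $1/2$. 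In that case $\langle\tau(\beta),\tau(\gamma),\tau(\delta)\rangle=T$ is immediate from Theorem~\ref{thm:TFacts}(1). If the images do not coincide literally with that figure, it still suffices to check by hand that they generate $T$, for instance by exhibiting among the subgroup they generate a point stabilizer --- a copy of Thompson's group $F$, by Theorem~\ref{thm:TFacts}(5) --- together with an element moving that point; since such an $F$ acts transitively on the remaining dyadic points, any subgroup of $T$ containing it and moving the fixed point is all of $T$. Either way $\langle\tau(\beta),\tau(\gamma),\tau(\delta)\rangle=T$, and applying $\tau^{-1}$ gives $\rist(C)=\langle\beta,\gamma,\delta\rangle$.

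The main obstacle is the explicit computation: extracting the central arcs correctly from the pictures in Figure~\ref{ArcPairGens}, carrying them through the labeling of Figure~\ref{fig:CentralComponentPictures}A with the right orientation and basepoint, and recognizing the three resulting elements of $T$. Everything downstream is routine bookkeeping together with the structure theory of $T$ recalled in Theorem~\ref{thm:TFacts}.
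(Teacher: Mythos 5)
Your proposal is correct and follows essentially the same route as the paper: the paper's proof is the one-line observation that $\beta$, $\gamma$, and $\delta$ act on $\partial C$ as the three generators of $T$ shown in Figure~\ref{fig:TGenerators}, i.e.\ their images under the isomorphism $\tau$ of Theorem~\ref{thm:MapsToT} are exactly those generators, so Theorem~\ref{thm:TFacts}(1) finishes the argument. Your preliminary check that the three elements lie in $\rist(C)$ and your fallback argument are extra care the paper omits, but the core identification is identical.
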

\begin{proof}These three elements act on $\partial C$ in the same way as the three generators for Thompson's group~$T$ shown in Figure~\ref{fig:TGenerators}.
\end{proof}

\begin{remark}Theorem~\ref{thm:MapsToT} shows that $T_B$ contains an isomorphic copy of Thompson's group~$T$.  This has a few basic consequences:
\begin{enumerate}
\item $T_B$ contains a non-abelian free group of rank two.\smallskip
\item $T_B$ contains a free abelian group of infinite rank.
\end{enumerate}
It follows from (1) that $T_B$ has exponential growth, and is not amenable.
\end{remark}

\begin{remark}One result of Theorem~\ref{thm:MapsToT} is that the group $\stab(C)$ has the structure of a semidirect product:
\[
\stab(C) \;=\; \mathrm{fix}(C) \rtimes T.
\]
Here $\mathrm{fix}(C)$ denotes the kernel of the homomorphism $\stab(C) \to T$, i.e.~the group of all elements of $\stab(C)$ that restrict to the identity on~$\partial C$.

Though we shall not prove it here, $\stab(C)$ is actually isomorphic to a direct limit of iterated wreath products:
\[
T \quad\longrightarrow\quad F \wr T \quad\longrightarrow\quad F \wr F \wr T \quad\longrightarrow\quad \cdots
\]
where $T$ acts on the dyadic points of the circle, and $F$ denotes Thompson's group~$F$ acting on the dyadic points in the interval $(0,1)$.  Note that each of the wreath products above is a restricted wreath product, i.e.~a semidirect product whose first component is an infinite direct \textit{sum} of groups.
\end{remark}

\bigskip
\section{Generators}

In this section we prove the following theorem.

\begin{theorem}\label{thm:generators}
The group $T_B$ is generated by the elements $\{\alpha,\beta,\gamma,\delta\}$ defined in Example~\ref{ex:generators}.
\end{theorem}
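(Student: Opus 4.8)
Write $G=\langle\alpha,\beta,\gamma,\delta\rangle$ for the subgroup of $T_B$ generated by the four elements of Example~\ref{ex:generators}; we must show $G=T_B$. The plan is to whittle an arbitrary $f\in T_B$ down to the identity by composing it with elements of $G$, in three successive reductions: first arrange that $f$ stabilizes the central component $C$, then arrange that $f$ acts trivially on $\partial C$, and finally dispose of what remains.

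\emph{First reduction: $G$ acts transitively on the components of the Basilica.} By Corollary~\ref{cor:GensForT} the rigid stabilizer $\rist(C)$ lies in $G$, so by Theorem~\ref{thm:MapsToT} the group $G$ acts on $\partial C$ as the full Thompson group $T$; in particular $G$ permutes the central arcs of the lamination transitively (Theorem~\ref{thm:TFacts}(4)), hence permutes the infinitely many components adjacent to $C$ transitively. On the other hand $\alpha$ carries $C$ onto one of the components adjacent to it. Combining these two facts, every component adjacent to $C$ lies in the $G$-orbit of $C$. Since the adjacency graph on the components of the Basilica is connected, an induction on distance from $C$ in this graph now shows that \emph{every} component lies in the $G$-orbit of $C$: if $C''$ is adjacent to a component $C'$ already known to be $G$-equivalent to $C$, choose $h\in G$ with $h(C)=C'$, observe that $h^{-1}(C'')$ is adjacent to $C$ and hence in the $G$-orbit of $C$, and compose. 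Granting this, for any $f\in T_B$ we may pick $h\in G$ with $hf(C)=C$, so it suffices to prove $\stab(C)\subseteq G$.

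\emph{Second reduction: removing the action on $\partial C$.} By Theorem~\ref{thm:MapsToT} every element of $\stab(C)$ acts on $\partial C$ as an element of $T$, and $\rist(C)\subseteq G$ already realizes all of $T$ this way; composing $hf$ with a suitable element of $\rist(C)$ therefore leaves an element of $\mathrm{fix}(C)$, the subgroup of $T_B$ acting as the identity on $\partial C$ (equivalently, $\stab(C)=\mathrm{fix}(C)\cdot\rist(C)$). So it suffices to prove $\mathrm{fix}(C)\subseteq G$. Now an element of $\mathrm{fix}(C)$ fixes the collapse-point of every central arc, hence maps every central arc to itself, and is therefore a finite product of elements of $T_B$ each supported on a single \emph{branch} --- the closure of the gap region lying on the far side of one central arc. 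Since $\rist(C)\subseteq G$ permutes the central arcs transitively, all of these branch groups are conjugate to one another inside $G$, so it suffices to prove that the single branch group $B_0$ of elements supported on the branch across $\{1/3,2/3\}$ lies in $G$.

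\emph{Third reduction and the main obstacle.} The branch across $\{1/3,2/3\}$ is, by the self-similarity of the Basilica lamination, an affinely scaled copy of the ``right half'' of the Basilica, and an element of $B_0$ is given by a reduced arc pair diagram supported inside $[1/3,2/3]$. I would finish by induction on the number of arcs in that diagram, showing that one can always strictly decrease this number by composing with a generator, or with a conjugate of a generator already proved to lie in $G$: the elements $\beta$ and $\gamma$ (together with suitable $G$-conjugates) can be made to insert or delete a primary arc at any prescribed location of the branch, playing exactly the role that the generators $x_n$ of Thompson's group $F$ play in the usual proof that $F$ is two-generated. This last step is the crux of the argument, and the place I expect the real work to be: one must organize the induction so that it is not circular --- every conjugating element used to move a generator into position inside a branch must itself already be known to lie in $G$ --- and one must check, via the characterization in Theorem~\ref{GroupTB}, that each elementary reduction of an arc pair diagram can genuinely be effected within $G$. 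Once $B_0\subseteq G$ is established, unwinding the reductions gives $\mathrm{fix}(C)\subseteq G$, then $\stab(C)\subseteq G$, and finally $T_B\subseteq G$.
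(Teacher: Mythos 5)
Your overall scaffolding matches the paper's: transitivity on components via the $T$-action on central arcs plus $\alpha$ and an induction on distance from $C$ (this is exactly Lemma~\ref{lemma:TransitiveOnRegions}); then reduction to $\stab(C)$; then composition with an element of $\rist(C)\subseteq G$ to kill the action on $\partial C$; then a decomposition of the remaining element into factors supported behind single central arcs. All of that is sound. But the third reduction --- showing that an element supported on a single branch lies in $G$ --- is the entire content of the theorem, and you have not proved it; you have only announced a plan (``insert or delete a primary arc at any prescribed location, like the generators $x_n$ of $F$'') and then correctly observed that you don't know how to organize the induction so it isn't circular. As written, the proposal therefore has a genuine gap precisely at the step you flag as the crux. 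The branch behind a central arc is not visibly acted on by conjugates of $\beta,\gamma,\delta$ in an $F$-like way, because those conjugates are only known to lie in $G$ when the conjugating elements do, and nothing in your argument produces elements of $G$ supported strictly inside a branch.

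The paper closes this gap with a different and quite specific device. The induction is on $n$, the total number of arcs in the reduced arc pair diagram of $f$ (base case $n=4$, where $f$ is the identity or $\delta$). After arranging that $h$ fixes every central arc, there are two cases. If $h$ has non-central arcs behind more than one central arc, it factors as $h_1\circ\cdots\circ h_m$ with each $h_i$ supported behind a single central arc and each having \emph{strictly fewer} than $n$ arcs, so induction applies. If $h$ is supported behind a single central arc $A$, one conjugates by a rotation $r^j\in\langle\beta,\gamma,\delta\rangle$ to move $A$ to $\{1/6,5/6\}$ without increasing the arc count, and then conjugates by $\alpha$: because the reduced diagram of the (nontrivial) conjugate must contain the arc $\{1/12,5/12\}$ in both domain and range, conjugation by $\alpha$ requires no expansion and afterwards a pair of arcs $\{5/12,7/12\}$ cancels, strictly decreasing $n$. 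That single cancellation is what makes the induction go through, and it is exactly the mechanism your proposal is missing. So the statement and the outer reductions are fine, but the proof is incomplete without an argument of this kind.
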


We will need the following lemma.

\begin{lemma}\label{lemma:TransitiveOnRegions}
The group $\langle\alpha,\beta,\gamma,\delta\rangle$ acts transitively on the gaps of the basilica lamination.
\end{lemma}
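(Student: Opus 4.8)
The plan is to prove the equivalent statement that every gap of the Basilica lamination can be carried onto the central gap $G_0$ by an element of $\Gamma := \langle\alpha,\beta,\gamma,\delta\rangle$; since $\Gamma$ is a group, this is the same as transitivity. The argument is an induction on the distance $d(G)$ of a gap $G$ from $G_0$ in the adjacency graph of gaps (equivalently, of components of the Basilica). Before starting the induction I would record three preliminary facts. First, every element of $T_B$ preserves the Basilica lamination, hence permutes the gaps and carries adjacent gaps to adjacent gaps. Second, the adjacency graph of the gaps is connected --- indeed it is a tree rooted at $G_0$, as one sees from the recursive construction of arc diagrams in Section~4 --- so $d(G)$ is finite for every gap $G$. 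Third, the gaps adjacent to $G_0$ correspond to the central arcs: each central arc abuts $G_0$ on one side and a single other gap on the other side, every gap adjacent to $G_0$ arises in this way, and this correspondence commutes with the action of any element of $T_B$ that fixes $G_0$.

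For the base case $d(G)=1$, I would first note that $\langle\beta,\gamma,\delta\rangle = \rist(C)$ by Corollary~\ref{cor:GensForT}, and that by Theorem~\ref{thm:MapsToT} this group acts on $\partial C$ --- equivalently on the central arcs --- exactly as Thompson's group $T$ acts on the dyadic points of the circle, which is transitively by Theorem~\ref{thm:TFacts}. Combined with the equivariant correspondence above, this shows that $\langle\beta,\gamma,\delta\rangle$ acts transitively on the gaps adjacent to $G_0$. It therefore suffices to exhibit a single gap adjacent to $G_0$ that $\Gamma$ carries onto $G_0$. Reading off the arc pair diagram for $\alpha$ in Figure~\ref{ArcPairGens}, $\alpha$ carries the central arc $\{1/3,2/3\}$ to the central arc $\{1/6,5/6\}$, matching the non-central side of the former with the side of the latter on which $G_0$ lies; equivalently, in the language of Example~\ref{ex:generators}, $\alpha$ slides the component $C'$ adjacent to $C$ onto $C$. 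Hence $\alpha$ maps the gap lying across $\{1/3,2/3\}$ from $G_0$ onto $G_0$, and precomposing with a suitable element of $\langle\beta,\gamma,\delta\rangle$ sends an arbitrary gap adjacent to $G_0$ onto $G_0$.

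For the inductive step, assume every gap at distance at most $d$ from $G_0$ can be carried onto $G_0$ by an element of $\Gamma$, and let $G$ satisfy $d(G)=d+1$. Then $G$ is adjacent to some gap $G'$ with $d(G')=d$, so there is $g\in\Gamma$ with $g(G')=G_0$. Since $g$ preserves adjacency, $g(G)$ is adjacent to $G_0$, and by the base case there is $h\in\Gamma$ with $h(g(G))=G_0$; then $hg\in\Gamma$ carries $G$ onto $G_0$. This completes the induction and hence the proof.

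The step I expect to be the main obstacle is the base case, and in particular the careful identification of the central arcs with the gaps adjacent to $G_0$ in a way that intertwines the $\rist(C)\cong T$ action with the permutation action on those gaps (so that transitivity transfers), together with the bookkeeping needed to confirm from the arc pair diagram that $\alpha$ carries that one gap exactly onto $G_0$ rather than onto some gap strictly deeper inside the region bounded by $\{1/3,2/3\}$. Once adjacency-preservation by $T_B$ is in hand, the inductive step is purely formal.
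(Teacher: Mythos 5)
Your proof is correct and follows essentially the same strategy as the paper's: induct on the depth of a gap (its distance from the central gap in the adjacency graph), use the fact from Section~6 that $\langle\beta,\gamma,\delta\rangle$ acts on the central arcs as $T$ acts on dyadic points (hence transitively), and use $\alpha$ to pull the gap across $\{1/3,2/3\}$ onto the central gap. The only difference is cosmetic---you apply the inductive hypothesis to the neighbor $G'$ of $G$ and then invoke the base case on $g(G)$, whereas the paper moves the first gap $R_1$ of the connecting path onto the center to drop the depth of $R_n$ by one---and both organizations are sound.
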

\begin{proof}
Define the \textit{depth} of each gap in the basilica lamination to be the number of arcs separating it from the central gap.  Thus the central gap has depth zero, adjacent gaps have depth one, and so forth.

Let $R_n$ be any gap of depth $n$.  We will show that $R_n$ can be mapped to the central gap using an element of $\langle\alpha,\beta,\gamma,\delta\rangle$.  We proceed by induction on $n$.

If $n=0$ then $R_n$ is already the central gap and we are done.  Otherwise, $R_n$ is connected to the central gap $R_0$ through a path $R_0,R_1,\ldots,R_n$ of pairwise adjacent gaps.

Consider the gap $R_1$, which borders the central gap along a central arc~$A$.  Now, recall from Section~6 that the group $\langle\beta,\gamma,\delta\rangle$ acts on the central arcs in the same way that Thompson's group~$T$ acts on the dyadic points of the unit circle.  In particular, the group $\langle\beta,\gamma,\delta\rangle$ acts transitively on the central arcs, so there exists an element $f\in \langle\beta,\gamma,\delta\rangle$ for which $f(A)$ is the arc $\{1/3,2/3\}$.  Then $f(R_1)$ must be the gap directly to the left of the central gap, so $(\alpha\circ f)(R_1)$ is the central gap.  Then $(\alpha\circ f)(R_1), \ldots, (\alpha\circ f)(R_n)$ is a path of pairwise adjacent gaps, where $(\alpha\circ f)(R_1)$ is the central gap, and therefore $(\alpha\circ f)(R_n)$ has depth $n-1$.  By our induction hypothesis, it follows that $(\alpha\circ f)(R_n)$ can be mapped to the central gap, and therefore $R_n$ can as well.
\end{proof}

\begin{proof}[Proof of Theorem \ref{thm:generators}]
Let $f\in T_B$.  By the lemma, we may assume that $f$ maps the central gap to itself, i.e.~that $f\in\stab(C)$.  We must show that $f\in\langle\alpha,\beta,\gamma,\delta\rangle$.  Let $n$ be the total number of arcs in the reduced arc pair diagram for~$f$. We proceed by induction on~$n$.

Note that $n\geq 4$, since the arcs $\{1/3,2/3\}$ and $\{1/6,5/6\}$ must appear in both the domain and range diagrams.  The base case is $n=4$, for which $f$ must be either the identity or the element $\delta$, both of which lie in $\langle\alpha,\beta,\gamma,\delta\rangle$.

Suppose that $n\geq 5$. Since $f\in \stab(C)$, the action of $f$ permutes the central arcs of the lamination, so any arc pair diagram for $f$ must map central arcs to central arcs.  For example, the reduced arc pair diagram for $f$ may look like
\begin{center}
\includeapd{GeneratorsProof1}
\end{center}
where $P_1,\ldots,P_4$ and $Q_1,\ldots,Q_4$ denote the arcs in the indicated sections of the diagram.

Now, the permutation of the central arcs is determined by the action of $f$ on the boundary of the central component.  By the results of the previous section, there exists an element $g\in\langle\beta,\gamma,\delta\rangle$ that permutes the central arcs in the same way:
\begin{center}
\includeapd{GeneratorsProof2}
\end{center}
Then the composition $h = g^{-1}\circ f$ fixes each central arc:
\begin{center}
\includeapd{GeneratorsProof3}
\end{center}
Note that $h$ has at most $n$ arcs in its reduced arc pair diagram.  It suffices to prove that $h \in \langle\alpha,\beta,\gamma,\delta\rangle$.  We shall prove this using two cases.

\medskip\noindent \textit{Case 1: $h$ has non-central arcs in more than one section.} \\
Let $A_1,\ldots,A_m$ be the central arcs in the arc pair diagram for $h$, and suppose that $h$ has non-central arcs behind more than one~$A_i$.  Then we can express $h$ as a composition $h_1\circ \cdots \circ h_m$, where each $h_i$ has the same central arcs as $h$, but has non-central arcs only in the section of the diagram bounded by~$A_i$.  For example, the element $h_2$ might have the following arc pair diagram:
\begin{center}
\includeapd{GeneratorsProof4}
\end{center}
Then each $h_i$ must have fewer than $n$ arcs.  By induction, it follows that each $h_i \in \langle\alpha,\beta,\gamma,\delta\rangle$, and therefore $h\in\langle\alpha,\beta,\gamma,\delta\rangle$ as well.

\medskip\noindent \textit{Case 2: $h$ has non-central arcs in only one section.} \\
Suppose that all of the non-central arcs of $h$ lie behind a single central arc~$A$, and let $r$ be the element of $\langle\beta,\gamma,\delta\rangle$ that cyclically permutes the central arcs of $h$:
\begin{center}
\includeapd{GeneratorsProof5}
\end{center}
Then $r^j(A) = \{1/6,5/6\}$ for some exponent~$j$.  Let $k = r^j\circ h \circ r^{-j}$.  Note that we can conjugate $h$ by $r^j$ without expanding~$h$, so $k$ has at most $n$~arcs.  Moreover, all of the non-central arcs of~$k$ lie to the right of $\{1/6,5/6\}$.  It suffices to prove that $k\in\langle\alpha,\beta,\gamma,\delta\rangle$.

Consider the reduced arc pair diagram for $k$:
\begin{center}
\includeapd{GeneratorsProof6}
\end{center}
If $k$ is the identity, we are done.  Otherwise, both $P$ and $Q$ must contain the arc $\{1/12,5/12\}$, as indicated in the picture.  In this case, we can conjugate $k$ by $\alpha$ without performing any expansions of~$k$.  The resulting arc pair diagram for $\alpha^{-1}\circ k\circ \alpha$ has the form:
\begin{center}
\includeapd{GeneratorsProof7}
\end{center}
We can now cancel the arcs $\{5/12,7/12\}$ in the domain and range to yield an arc pair diagram for $\alpha^{-1}\circ k \circ \alpha$ with fewer than $n$~arcs.  By our induction hypothesis, it follows that $k\in\langle\alpha,\beta,\gamma,\delta\rangle$.
\end{proof}

\begin{remark}
Now that we have shown that $T_B$ is finitely generated, we would like to know whether it is finitely presented.  Though $F$, $T$, and $V$ are finitely presented, it seems that the proof for these groups does not generalize to $T_B$.  As a result, we suspect that $T_B$ is not finitely presented.
\end{remark}

\bigskip
\section{The Commutator Subgroup}

\begin{figure}[tb]
\centering
\includegraphics{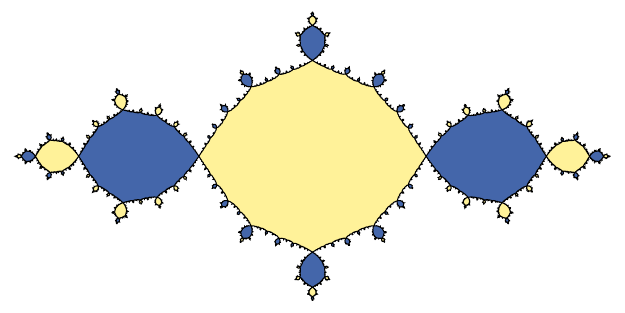}
\caption{The two-coloring of the components of the basilica.}
\label{fig:TwoColoring}
\end{figure}
In this section we investigate the commutator subgroup $[T_B,T_B]$ of $T_B$.  First, consider the two-coloring of the components of the basilica shown in Figure~\ref{fig:TwoColoring}.  This two-coloring is related to dynamics: if $p(z) = z^2 -1$, then $p\circ p$ has attracting fixed points at $0$ and $-1$, and the colors represent the basins of attraction for these fixed points.

\begin{theorem}The commutator subgroup $[T_B,T_B]$ is the index-two subgroup of\/ $T_B$ consisting of all elements that preserve the two-coloring of the components of the basilica.  This group is generated by the elements $\{\beta,\gamma,\delta,\beta^\alpha,\gamma^\alpha,\delta^\alpha\}$.
\end{theorem}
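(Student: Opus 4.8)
The plan is to realize $[T_B,T_B]$ as the kernel of an explicit homomorphism $\epsilon\colon T_B\to\Z/2\Z$, and then to pin down the generating set by combining the perfectness of $\rist(C)$ with a single relation between $\alpha$ and $\delta$. First I would construct $\epsilon$. The components of the Basilica, together with the adjacency relation, form a connected bipartite graph $\Gamma$ (bipartiteness being exactly the assertion that the two-coloring of Figure~\ref{fig:TwoColoring} is proper), and a connected bipartite graph has a unique proper $2$-coloring up to interchanging the two colors. Every $f\in T_B$ preserves the Basilica lamination, hence permutes its leaves and gaps, hence induces an automorphism of $\Gamma$; therefore $f$ either preserves the two color classes or interchanges them, and I set $\epsilon(f)=0$ or $1$ accordingly. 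This is a homomorphism, and from Example~\ref{ex:generators} one reads off $\epsilon(\beta)=\epsilon(\gamma)=\epsilon(\delta)=0$ (these fix the central component $C$, hence its color) while $\epsilon(\alpha)=1$ ($\alpha$ carries $C$ to an adjacent, oppositely colored component). So $\epsilon$ is surjective, $K:=\ker\epsilon$ is exactly the index-two subgroup of elements preserving the two-coloring, and $[T_B,T_B]\subseteq K$ since $T_B/K$ is abelian.

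For the other containments, set $H:=\langle\beta,\gamma,\delta,\beta^\alpha,\gamma^\alpha,\delta^\alpha\rangle$. By Corollary~\ref{cor:GensForT} and Theorem~\ref{thm:MapsToT}, $\langle\beta,\gamma,\delta\rangle=\rist(C)\cong T$, which is simple and hence perfect, so $\beta,\gamma,\delta\in[T_B,T_B]$; as $[T_B,T_B]$ is normal, also $\beta^\alpha,\gamma^\alpha,\delta^\alpha\in[T_B,T_B]$, and thus $H\subseteq[T_B,T_B]\subseteq K$. It now suffices to prove $\alpha^2\in H$. Granting that, $H$ is normal in $T_B=\langle\alpha,\beta,\gamma,\delta\rangle$ --- one checks $H^{\alpha}=H$ using $\beta^{\alpha^{2}}=\alpha^{-2}\beta\alpha^{2}$ and $\beta^{\alpha^{-1}}=\alpha^{2}\beta^{\alpha}\alpha^{-2}$ together with the analogues for $\gamma$ and $\delta$ --- so $T_B/H$ is cyclic, generated by the image of $\alpha$, hence abelian, giving $[T_B,T_B]\subseteq H$; and then $T_B^{\mathrm{ab}}=T_B/H$ is cyclic of order at most $2$ and surjects onto $\Z/2\Z$, so has order exactly $2$, forcing $H=K$.

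The one genuinely computational point --- and the main obstacle --- is the relation
\[
\delta\,\alpha\,\delta^{-1}=\alpha^{-1},
\]
which geometrically says that the $180^{\circ}$ rotation $\delta$ reverses the direction in which $\alpha$ slides the chain of real-axis components. I would verify it directly from the arc pair diagrams of $\alpha$ and $\delta$; as a check, conjugating $\alpha$ by the rotation $\delta$ turns the breakpoint correspondence $1/3\mapsto 1/6,\ 2/3\mapsto 5/6$ of Example~\ref{ex:generators} into $5/6\mapsto 2/3,\ 1/6\mapsto 1/3$, which is exactly the breakpoint correspondence of $\alpha^{-1}$. Since $\delta^{2}=\mathrm{id}$ this gives $\delta\alpha=\alpha^{-1}\delta$, hence $\delta^{\alpha}=\alpha^{-1}\delta\alpha=\alpha^{-2}\delta$, hence $\alpha^{-2}=\delta^{\alpha}\delta\in H$, so $\alpha^{2}\in H$; together with the previous paragraph this proves $[T_B,T_B]=K=H$. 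The conceptual heart is the observation that once $\rist(C)\cong T$ has killed $\beta,\gamma,\delta$ in the abelianization, $T_B^{\mathrm{ab}}$ is necessarily cyclic on the image of $\alpha$, and the lone relation $\delta\alpha\delta^{-1}=\alpha^{-1}$ is precisely what collapses it to $\Z/2\Z$.
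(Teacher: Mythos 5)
Your proposal is correct and follows essentially the same route as the paper: the coloring homomorphism onto $\Z/2\Z$, the perfectness of $\rist(C)\cong T$ to place the six listed elements in $[T_B,T_B]$, and the key relation $\alpha^{2}=\delta^{-1}\delta^{\alpha}$ (your $\delta\alpha\delta^{-1}=\alpha^{-1}$) to dispose of $\alpha^2$. The only cosmetic difference is that where the paper invokes Schreier's Lemma to see that the kernel is generated by the six elements together with $\alpha^{2}$, you instead verify directly that $H$ is normal and that $T_B/H$ is cyclic on the image of $\alpha$ --- an equally valid bookkeeping of the same ingredients.
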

\begin{proof}Define a homomorphism $\pi\colon T_B \to \Z/2\Z$ by $\pi(f) = 0$ if $f$ preserves the coloring of the components, and $\pi(f) = 1$ if $f$ switches the two colors.  Since $\Z/2\Z$ is abelian, we know that $[T_B,T_B] \leq \ker(\pi)$.  We would like to show that $\ker(\pi) \leq [T_B,T_B]$.

By Schreier's Lemma, $\ker(\pi)$ is generated by the six elements in the statement of the theorem together with the element $\alpha^2$.   It is easy to check that $\alpha^2 = \delta^{-1}\circ\delta^\alpha$, which proves that $\ker(\pi)$ is generated by the six given elements.  We must show that each of these elements lies in the commutator subgroup.

From Section~6, we know that $\beta$, $\gamma$, and $\delta$ generate a subgroup of~$T_B$ isomorphic to Thompson's group~$T$.  Since $T$ is simple, $[T,T] = T$, so $\beta$,~$\gamma$, and $\delta$ must be products of commutators in $\langle\beta,\gamma,\delta\rangle$.  Hence $\beta,\gamma,\delta\in[T_B,T_B]$.  Since $[T_B,T_B]$ is normal in $T_B$, it follows that $\beta^\alpha,\gamma^\alpha,\delta^\alpha\in[T_B,T_B]$, so $\ker(\pi) = [T_B,T_B]$.
\end{proof}

\begin{corollary}The abelianization of $T_B$ is isomorphic to $\Z/2\Z$.
\end{corollary}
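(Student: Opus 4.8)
The plan is to deduce this immediately from the preceding theorem. By definition the abelianization of $T_B$ is the quotient $T_B/[T_B,T_B]$, so it suffices to identify that quotient. The theorem tells us exactly what $[T_B,T_B]$ is: it is the index-two subgroup of $T_B$ consisting of the elements that preserve the two-coloring of the components of the Basilica. Hence $T_B/[T_B,T_B]$ is a group of order two, and the unique such group up to isomorphism is $\mathbb{Z}/2\mathbb{Z}$. This is all the argument requires.

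If one prefers an explicit isomorphism rather than a counting argument, I would invoke the homomorphism $\pi\colon T_B\to\mathbb{Z}/2\mathbb{Z}$ constructed in the proof of the theorem, which records whether an element preserves or switches the two colors. That proof establishes $\ker(\pi)=[T_B,T_B]$, and since this kernel has index two the map $\pi$ is surjective; the first isomorphism theorem then gives $T_B/[T_B,T_B]=T_B/\ker(\pi)\cong\mathbb{Z}/2\mathbb{Z}$ directly. There is no real obstacle here: all of the substantive content has already been carried out in the theorem, and the corollary is a one-line bookkeeping consequence.
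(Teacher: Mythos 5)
Your proof is correct and matches the paper's intent exactly: the paper states this corollary without proof precisely because, as you observe, it follows immediately from the preceding theorem identifying $[T_B,T_B]$ as an index-two subgroup (equivalently, as $\ker(\pi)$ for the coloring homomorphism $\pi\colon T_B\to\Z/2\Z$). Nothing further is needed.
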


\begin{remark} It is not hard to show that the epimorphism $T_B \to \Z/2\Z$ splits.  For example, consider the element $\delta\circ\alpha$, which acts as an order-two ``rotation'' of the basilica around the point where the central component and the main left component touch.  This element switches the two colors, and therefore defines a splitting $\Z/2\Z \to T_B$. It follows that $T_B$ is a semidirect product of $[T_B,T_B]$ with~$\Z/2\Z$.
\end{remark}

\begin{theorem}The group $[T_B,T_B]$ is simple.
\end{theorem}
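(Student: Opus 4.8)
The plan is to follow the usual route to simplicity of a Thompson-like group. Write $G=[T_B,T_B]$ and let $N\trianglelefteq G$ be a nontrivial normal subgroup; I will show $N=G$. Throughout I use: $G=\ker\pi$ where $\pi$ is the two-coloring homomorphism, and $G=\langle\rist(C),\rist(C)^{\alpha}\rangle$ with $\rist(C)=\langle\beta,\gamma,\delta\rangle$ and $\rist(C)^{\alpha}=\langle\beta^{\alpha},\gamma^{\alpha},\delta^{\alpha}\rangle$ (from the description of $[T_B,T_B]$ above and Corollary~\ref{cor:GensForT}); the fact that, for any gap $R$, the conjugate $\rist(R)$ of $\rist(C)$ carrying $C$ to $R$ is isomorphic to the simple group $T$ and acts on $\partial R$ as all of Thompson's group $T$, hence with trivial center (Theorem~\ref{thm:MapsToT}); and the observation that, since $T_B$ is transitive on gaps (Lemma~\ref{lemma:TransitiveOnRegions}), $[T_B:G]=2$, and $G$ preserves the two-coloring, $G$ is transitive on the gaps of each color, the two resulting $G$-conjugacy classes of rigid stabilizers being represented by $\rist(C)$ and $\rist(C)^{\alpha}$. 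Finally, for a standard interval $I$, the subgroup $G_I$ of all elements of $T_B$ supported on $\overline{I}$ lies in $G$ (such an element preserves the rooted lamination behind $I$, hence the coloring) and is nonabelian.

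\textbf{Step 1: produce a small element of $N$.} Given $1\ne g\in N$, choose an open interval $U$ with $g(\overline U)\cap\overline U=\varnothing$ and, inside $U$, a standard interval $I$ behind an arc whose inner gap has a prescribed color (possible, since such intervals are dense). Picking noncommuting $h,\varphi\in G_I$, the commutator $k=[g,h]\in N$ equals $h^{-1}$ on $I$, $ghg^{-1}$ on $g(I)$, and the identity elsewhere, so $m:=[k,\varphi]\in N$ equals $[h,\varphi]\ne1$ on $I$ and is the identity everywhere else. Thus $N$ contains a nontrivial $m$ supported on $\overline I$, with the inner gap of $I$ of either prescribed color; note $m$ fixes the arc $\partial I$ and hence fixes that inner gap setwise.

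\textbf{Step 2: capture a copy of $T$.} Since $m\ne1$, it cannot fix every gap setwise while acting trivially on every gap boundary (that would force $m$ to fix a dense set of circle points), so there is a gap $R\subseteq\overline I$ with either $m(R)=R$ and $m|_{\partial R}\ne\mathrm{id}$, or $m(R)\ne R$. In the first case $m$ normalizes $\rist(R)$, so $[m,\varphi]\in N\cap\rist(R)$ for every $\varphi\in\rist(R)$; choosing $\varphi$ with $\varphi|_{\partial R}$ not commuting with the nontrivial element $m|_{\partial R}\in T$ (possible as $T$ has trivial center) makes $[m,\varphi]\ne1$. In the second case, after passing to a subgap of $R$ and possibly replacing $m$ by $m^{-1}$ we may assume $\overline R\cap S^1$ and $m(\overline R)\cap S^1$ are disjoint; choosing noncommuting $\varphi,\psi\in\rist(R)$ with circle-support inside an arc $J$ satisfying $J\cap m(J)=\varnothing$ (possible since $\rist(R)\cong T$ is nonabelian and has elements of arbitrarily small support in $\partial R$), the element $[m,\varphi]^{-1}\,\psi\,[m,\varphi]\,\psi^{-1}\in N$ reduces to the nontrivial element $[\varphi,\psi]\in\rist(R)$. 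Either way $N\cap\rist(R)\ne1$, and since $\rist(R)\cong T$ is simple and normalized by $G$ this forces $\rist(R)\le N$.

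\textbf{Step 3 and the main obstacle.} Running Steps 1--2 once for each color produces gaps of both colors whose rigid stabilizers lie in $N$; transitivity of $G$ on each color class together with normality of $N$ then gives $\rist(C)\le N$ and $\rist(C)^{\alpha}\le N$, so $N\supseteq\langle\rist(C),\rist(C)^{\alpha}\rangle=G$ and $G$ is simple. I expect Step 2 to be the difficult part: the subgroups $\rist(R)$ are \emph{not} supported on proper arcs of the circle --- an element of $\rist(C)$ permutes the intervals lying behind the central arcs --- so one cannot just localize a general element of $N$ into one of them, and the displacement-and-commutator bookkeeping must be arranged to land inside such a subgroup regardless. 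The remaining subtlety is that $G$ is transitive only on each color class and not on all gaps, so both colors genuinely must be treated separately in order to reach both generators $\rist(C)$ and $\rist(C)^{\alpha}$ of $G$.
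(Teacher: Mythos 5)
Your overall strategy coincides with the paper's: Epstein's double commutator trick to localize into a small standard interval, then force $N$ to meet a rigid stabilizer $\rist(R)\cong T$, invoke simplicity of $T$, and finish using the fact that $\rist(C)$ and $\rist(C)^{\alpha}$ generate $[T_B,T_B]$. Steps 1 and 3 are sound. The genuine gap is in Step 2, Case 1, where you assert that an element $m$ with $m(R)=R$ normalizes $\rist(R)$, so that $[m,\varphi]\in\rist(R)$. This is false: $\rist(R)$ is not normal in $\stab(R)$. Concretely, take $m$ supported behind a single boundary arc $A$ of $R$ and not affine there, and $\varphi\in\rist(R)$ with $\varphi(A)\ne A$; then $m\varphi m^{-1}$, restricted to the circle interval behind $A$, equals $\varphi\circ m^{-1}$, which is not the affine (rigid) map onto the interval behind $\varphi(A)$, so $m\varphi m^{-1}\notin\rist(R)$. (The same point is visible in the paper's decomposition $\stab(C)=\mathrm{fix}(C)\rtimes T$: the copy of $T$, namely $\rist(C)$, is the non-normal complement.) Since Case 1 is the generic situation, the argument as written does not produce an element of $N\cap\rist(R)$ and does not close. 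Case 2 is also only sketched (the existence of noncommuting $\varphi,\psi\in\rist(R)$ with circle support in a prescribed small arc $J$ disjoint from $m(J)$ needs justification), but Case 1 is the real obstruction.

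The repair is already latent in your Step 1: your computation shows not merely that one nontrivial $m\in N$ is supported on $I$, but that $[h,\varphi]\in N$ for \emph{every} pair $h,\varphi$ supported on $I$. The paper exploits exactly this and thereby avoids your Step 2 entirely. Conjugate (inside $[T_B,T_B]$, which is transitive on arcs because every arc borders gaps of both colors) so that the arc over $I$ is $\{1/3,2/3\}$ and $I=[-1/3,1/3]$. The elements of $\rist(C)$ fixing the central arc $\{1/3,2/3\}$ are precisely those supported on $[-1/3,1/3]$, and under the isomorphism $\rist(C)\cong T$ of Theorem~\ref{thm:MapsToT} they form the stabilizer of a dyadic point, a copy of Thompson's group $F$. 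Since $F$ is nonabelian, choosing $h,\varphi$ there puts a nontrivial element of $\rist(C)$ into $N$ directly, with no case analysis and no need to conjugate $\rist(C)$ by anything outside it; repeating with $I$ carried to $[1/3,2/3]$ handles $\rist(C)^{\alpha}$, and your Step 3 then finishes as written.
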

\begin{proof}This proof follows the basic outline for the proof of the simplicity of~$T$, which in turn is based on the work of Epstein~\cite{epstein} on the simplicity of groups of diffeomorphisms.  In particular, we use Epstein's double commutator trick, together with an argument that $T_B$ is generated by elements of small support.

Let $N$ be a nontrivial normal subgroup of $[T_B,T_B]$.  We wish to prove that $N = [T_B,T_B]$.

Let $f$ be a non-trivial element of $N$.  Since the dyadic points are dense on the unit circle, there must exist some dyadic rational $q$ that is not fixed by~$f$.  Then there exists a sufficiently small standard interval $I$ containing $q$ such that
\begin{itemize}
\item The endpoints of $I$ are joined by an arc in the basilica lamination, and\smallskip
\item The image interval $J = f(I)$ is disjoint from $I$.
\end{itemize}
Now, consider any pair of elements $g,h\in [T_B,T_B]$ with support in $I$.  Since $f(I)=J$, the conjugate $f\circ g^{-1}\circ f^{-1}$ has support in~$J$.  Then the commutator $[g,f] = g\circ f\circ g^{-1}\circ f^{-1}$ has support in $I\cup J$, and agrees with $g$ on~$I$.  Since $h$ has support in $I$, it follows that
\[
\bigl[\,[g,f],h\,\bigr] \;=\; [g,h].
\]
The double commutator on the left must be an element of~$N$, and therefore $[g,h]\in N$ for every pair of elements $g,h\in [T_B,T_B]$ with support in~$I$.

Now, since each arc of the basilica lamination borders gaps of two different colors, the group $[T_B,T_B]$ acts transitively on these arcs. In particular, there exists an element $k\in[T_B,T_B]$ that maps the arc joining the endpoints of $I$ to $\{1/3,2/3\}$.  Then $k(I)$ must be either $[1/3,2/3]$ or $[-1/3,1/3]$.  Replacing $I$ with a smaller interval if necessary, we may assume that $k(I) = [-1/3,1/3]$.  Conjugating the result of the previous paragraph by $k$, we find that $[g,h] \in N$ for any $g,h\in [T_B,T_B]$ with support on the interval $[-1/3,1/3]$.

Next, recall from Section 6 that the group $\rist(C)$ is isomorphic to Thompson's group~$T$.  Under this isomorphism, the elements of $\rist(C)$ with support on $[-1/3,1/3]$ correspond to the stabilizer of~$1/2$ in~$T$, which is a copy of Thompson's group~$F$.  Since $F$ is not abelian, there exists at least one pair of elements $g,h\in\rist(C)$ with support in $[-1/3,1/3]$ for which $[g,h]$ is nontrivial. Then $[g,h]$ lies in both $N$ and $\rist(C)$, so the intersection $N\cap \rist(C)$ is a nontrivial normal subgroup of $\rist(C)$.  But $\rist(C) \cong T$ and $T$ is simple, so $N\cap\rist(C) = \rist(C)$, and therefore $\rist(C) \leq N$.

Now consider the conjugate subgroup $\rist(C)^\alpha$, which can be interpreted as the rigid stabilizer of the component immediately to the left of the central component.  If we choose $I$ so that $k(I) = [1/3,2/3]$, we can use the same argument as before to show that $N\cap \rist(C)^\alpha$ has a nontrivial element, and therefore $\rist(C)^\alpha \leq N$.

So far we have proven that $\rist(C)\cup\rist(C)^\alpha \subseteq N$.  However, the group $\rist(C)$ is generated by $\beta$, $\gamma$, and $\delta$, while $\rist(C)^\alpha$ is generated by $\beta^\alpha$, $\gamma^\alpha$, and $\delta^\alpha$.  Since these six elements generate $[T_B,T_B]$, we conclude that $N$ contains all of $[T_B,T_B]$, and therefore $[T_B,T_B]$ is simple.
\end{proof}

Note that this proof does not actually require that the subgroup $N$ be contained in $[T_B,T_B]$.  Indeed, this proof shows that $[T_B,T_B]$ is the only nontrivial proper subgroup of $T_B$ that is normalized by $[T_B,T_B]$.  In particular:

\begin{corollary}The commutator subgroup $[T_B,T_B]$ is the only nontrivial proper normal subgroup of $T_B$.
\end{corollary}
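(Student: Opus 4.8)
The plan is to obtain the corollary by combining the observation recorded in the remark above---that the proof of the simplicity of $[T_B,T_B]$ actually shows that $[T_B,T_B]$ is the only nontrivial proper subgroup of $T_B$ normalized by $[T_B,T_B]$---with the fact that $[T_B,T_B]$ has index two in $T_B$. Concretely: if $N$ is a nontrivial proper normal subgroup of $T_B$, then $N$ is in particular normalized by the subgroup $[T_B,T_B]\le T_B$, so the strengthened simplicity statement forces $[T_B,T_B]\le N$; since $[T_B:[T_B,T_B]]=2$ (by the corollary identifying the abelianization with $\Z/2\Z$), the only subgroups containing $[T_B,T_B]$ are $[T_B,T_B]$ and $T_B$ itself, and properness of $N$ leaves only $N=[T_B,T_B]$. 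Conversely $[T_B,T_B]$ is genuinely a nontrivial (as $T_B$ contains a nonabelian free group, it is not abelian), proper, normal subgroup, so it is the unique one.

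The one point deserving comment is why the simplicity proof survives when the hypothesis ``$N\trianglelefteq[T_B,T_B]$'' is weakened to ``$N$ is normalized by $[T_B,T_B]$''. In that proof the elements $g$ and $h$ used to form the double commutator $\bigl[[g,f],h\bigr]$ have support inside a single standard interval $I$ whose endpoints are joined by an arc $A$ of the Basilica lamination. Such an element is the identity outside the region bounded by $A$, so it fixes the two gaps adjacent to $A$; these gaps receive different colors in the two-coloring of Figure~\ref{fig:TwoColoring}, and since that coloring is determined by adjacency it follows that $g$ and $h$ preserve the whole coloring, i.e.\ $g,h\in\ker(\pi)=[T_B,T_B]$. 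Hence $[g,f]=(gfg^{-1})f^{-1}\in N$ when $f\in N$, and $\bigl[[g,f],h\bigr]\in N$ as well, because conjugation by the $[T_B,T_B]$-elements $g$ and $h$ (and later by $k\in[T_B,T_B]$) preserves $N$. The elements of $\rist(C)$ appearing at the end of the argument also lie in $[T_B,T_B]$, since $\rist(C)\cong T$ is simple and hence equals its own commutator subgroup. Thus every step of the original argument applies, giving $\rist(C),\rist(C)^\alpha\le N$ and therefore $[T_B,T_B]\le N$.

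I do not expect any serious obstacle here: the corollary is essentially a formal consequence of the (already established) simplicity of $[T_B,T_B]$ together with the index-two computation, and the only genuinely new ingredient is the remark that an element of $T_B$ supported behind a single arc automatically preserves the two-coloring of the components.
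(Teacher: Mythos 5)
Your proposal is correct and follows the same route as the paper, which likewise derives the corollary from the observation that the simplicity argument only needs $N$ to be normalized by $[T_B,T_B]$, combined with the index-two computation. Your added justification that elements supported behind a single arc of the lamination preserve the two-coloring (and hence lie in $[T_B,T_B]$) is a detail the paper leaves implicit, but it is the right reason the argument survives the weakened hypothesis.
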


Since any finite-index subgroup of $T_B$ must contain a normal subgroup of finite index, we also obtain:

\begin{corollary}The commutator subgroup $[T_B,T_B]$ is the only proper finite-index subgroup of~$T_B$.
\end{corollary}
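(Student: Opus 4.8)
The plan is to reduce immediately to the previous corollary by passing to the normal core, exactly as the sentence preceding the statement suggests. Let $H$ be an arbitrary proper subgroup of $T_B$ of finite index. I would consider its normal core $K = \bigcap_{g \in T_B} gHg^{-1}$, equivalently the kernel of the action of $T_B$ by left multiplication on the finite set of left cosets of $H$. Since $[T_B : H]$ is finite, $T_B/K$ embeds into a finite symmetric group, so $K$ is a \emph{normal} subgroup of $T_B$ of finite index, and of course $K \leq H$.

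Next I would check that $K$ is a nontrivial proper normal subgroup of $T_B$. It is proper because $K \leq H$ and $H$ is proper. It is nontrivial because $T_B$ is infinite — for instance it contains an isomorphic copy of Thompson's group $T$ by Theorem~\ref{thm:MapsToT} — and a finite-index subgroup of an infinite group is again infinite, hence nontrivial.

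By the preceding corollary, $[T_B,T_B]$ is the only nontrivial proper normal subgroup of $T_B$, so $K = [T_B,T_B]$, and therefore $[T_B,T_B] = K \leq H \leq T_B$. But $[T_B,T_B]$ has index two in $T_B$ (by the corollary computing the abelianization of $T_B$), so the only subgroups lying between $[T_B,T_B]$ and $T_B$ are those two groups themselves; as $H$ is proper, $H = [T_B,T_B]$. I do not expect any real obstacle here: all of the substance is contained in the previous corollary together with the index-two fact, and this is a routine deduction using the normal core.
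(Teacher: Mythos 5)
Your argument is correct and is exactly the one the paper intends: the paper's one-sentence justification (``any finite-index subgroup must contain a normal subgroup of finite index'') is precisely the normal-core reduction you spell out, followed by the appeal to the previous corollary and the index-two fact. No issues.
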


\begin{remark}Although the group $[T_B,T_B]$ is finitely generated and simple, it is not isomorphic to Thompson's group~$T$.  To see this, consider the involutions $\delta$ and $\delta^\alpha$ in $[T_B,T_B]$.  The first is the order-two rotation of the basilica around the origin, while the second is an order-two ``rotation'' of the basilica around the main left component.  Since $\delta$ fixes a yellow component of the basilica (the central component), and $\delta^\alpha$ fixes a blue component (the main left component), these elements cannot be conjugate in $[T_B,T_B]$.  However, Thompson's group~$T$ has only one conjugacy class of involutions, namely the conjugacy class of the order-two rotation of the circle.  (This follows from the results in~\cite{belk-matucci}. See also~\cite{fossas}.)  Thus $[T_B,T_B]$ cannot be isomorphic to~$T$.
\end{remark}

\bigskip
\bibliographystyle{plain}

\begin{thebibliography}{10}

\smallskip

\bibitem{belk-matucci}
J.~Belk and F.~Matucci, Conjugacy and dynamics in Thompson's groups. \textit{Geom.\ Dedicata} (2013), 1--23.

\smallskip

\bibitem{bieri-strebel}
R.~Bieri and R.~Strebel, On groups of PL homeomorphisms of the real line. Preprint (1985).

\smallskip

\bibitem{brin2}
M.~Brin, Higher dimensional Thompson groups. \textit{Geom.\ Dedicata} \textbf{108} (2004), 163--192.

\smallskip

\bibitem{brin}
M.~Brin, The algebra of strand splitting. I.~A braided version of Thompson's group~V. \textit{J.~Group Theory} \textbf{10} (2004), 757--788.

\smallskip

\bibitem{brown}
K.~Brown, Finiteness properties of groups.  \textit{J.~Pure Appl.\ Algebra} \textbf{44} (1987), 45--75.

\smallskip

\bibitem{cannon-floyd-parry}
J.~W.~Cannon, W.~J.~Floyd, and W.~R.~Parry, Introductory notes on Richard Thompson's groups. \textit{Enseign.\ Math.} \textbf{42} (1996), 215--256.

\smallskip

\bibitem{dehornoy}
P.~Dehornoy, The group of parenthesized braids. \textit{Adv.~Math.} \textbf{205} (2006), 354--409.

\smallskip

\bibitem{epstein}
D.~B.~A.~Epstein, The simplicity of certain groups of homeomorphisms. \textit{Compos.\ Math.} \textbf{22} (1960), 165--173.

\smallskip

\bibitem{fossas}
A.~Fossas, Note on the number of conjugacy classes of torsion elements on Thompson's group $T$. Preprint (2013).

\smallskip

\bibitem{grigorchuk-zuk}
R.~Grigorchuk and A.~\.{Z}uk, On a torsion-free weakly branch group defined by a three-state automaton. \textit{Internat.\ J.~Algebra Comput.} \textbf{12} (2002), 223--246.

\smallskip

\bibitem{guba-sapir}
V.~Guba and M.~Sapir, Diagram groups. \textit{Mem.\ Amer.\ Math.\ Soc.} \textbf{130} (1997), 1--117.

\smallskip

\bibitem{higman}
G.~Higman, \textit{Finitely presented infinite simple groups}. Notes on Pure Math., vol.~8, Australian National University, Canberra 1974.

\smallskip

\bibitem{nekrashevych}
V.~Nekrashevych, \textit{Self-similar groups}. volume 117 of Mathematical Surveys and
Monographs, American Mathematical Society, Providence, RI 2005.

\smallskip

\bibitem{stein}
M.~Stein, Groups of piecewise linear homeomorphisms. \textit{Trans.\ Amer.\ Math.\ Soc.} \textbf{332} (1992), 477--514.

\smallskip

\bibitem{thurston}
W.~P.~Thurston, On the geometry and dynamics of iterated rational maps. In \textit{Complex Dynamics} (D.~Schleicher, N.~Selinger, eds.), A~K~Peters, Wellesley, MA 2009, 3--137.




\end{thebibliography}

\end{document}